\documentclass[proc]{edpsmath}
\usepackage{bm}
\usepackage{tikz}
\usetikzlibrary{quantikz}
\usetikzlibrary{positioning}
\usepackage{multirow} 
\usepackage{booktabs}
\usepackage{algorithm}
\usepackage{algpseudocode}
\usepackage{float}
\usepackage{hyperref}
\usepackage{amsthm}

\usepackage[bf,textfont={it}]{caption}

\newtheorem{theorem}{Theorem}

\newtheorem{proposition}{Proposition}

\newcommand{\I}{\mathbb{I}}
\newcommand{\X}{\mathtt{X}} 
\newcommand{\Y}{\mathtt{Y}} 
\newcommand{\Z}{\mathtt{Z}}

\newtheorem{Proposition}{Proposition}
\newtheorem{Lemma}{Lemma}

\begin{document}

%%-----------------------------
%%      the top matter
%%-----------------------------
\title{Mitigating Barren Plateaus via Domain Decomposition in Variational Quantum Algorithms for Nonlinear PDEs}

\author{Laila S. Busaleh}
\address{Department of Mathematics, College of Science and Humanities, 
Imam Abdulrahman Bin Faisal University, Jubail, Saudi Arabia.\\
Applied Mathematics and Computational Science, 
CEMSE Division, King Abdullah University of Science and Technology (KAUST), 
Thuwal 23955-6900, Kingdom of Saudi Arabia (lsbusaleh@iau.edu.sa).}

\author{Jeonghyeuk Kwon}
\address{Kyung Hee University, Department of Mathematics, Seoul 02447, Republic of Korea (jeonghyeuk@khu.ac.kr)}
\author{Orlane Zang}
\address{Orange Innovation, Université de Picardie Jules Verne, France.}

\author{Muhammad Hassan}
\address{Technische Universität München, Department of Mathematics, Boltzmannstrasse 3, Garching 85748, Germany}
\author{Yvon Maday}
\address{Sorbonne Université, CNRS, Université Paris Cité, Laboratoire Jacques-Louis Lions (LJLL),
F-75005 Paris, France.}
%
%\dedicated{\it Dedicated to Maurice Dupont} %if necessary
%

\begin{abstract}
Barren plateaus present a major challenge in the training of variational quantum algorithms (VQAs), particularly for large-scale discretizations of nonlinear partial differential equations. In this work, we introduce a domain decomposition framework to mitigate barren plateaus by localizing the cost functional. Our strategy is based on partitioning the spatial domain into overlapping subdomains, each associated with a localized parameterized quantum circuit and measurement operator. Numerical results for the time-independent Gross–Pitaevskii equation show that the domain-decomposed formulation, allowing subdomain iterations to be interleaved with optimization iterations, exhibits improved solution accuracy and stable optimization compared to the global VQA formulation.
\end{abstract}

\begin{resume}
Les \textit{plateaux stériles} (\textit{barren plateaus}) représentent une difficulté majeure pour l’entraînement des algorithmes quantiques variationnels, notamment lorsqu’ils sont appliqués à des discrétisations à grande échelle d’équations aux dérivées partielles non linéaires. Dans ce travail, nous introduisons une approche fondée sur  une méthode de décomposition de domaine, dont l’objectif est d’atténuer ce phénomène par une localisation de la fonction de coût. Notre approche repose sur une partition du domaine spatial en sous-domaines avec recouvrement, chacun étant associé à un circuit quantique paramétré local ainsi qu’à un opérateur de mesure dédié.
Les résultats numériques obtenus pour l’\textit{équation de Gross–Pitaevskii stationnaire} montrent que cette formulation basée sur la décomposition de domaine, permettant d'enchevétrer les itérations par sous domaine avec les itérations d'optimisation, conduit à une meilleure précision de la solution et à une optimisation plus stable que la formulation globale des algorithmes quantiques variationnels.
\end{resume}
\maketitle
%%-----------------------------
%%      your text
%%-----------------------------

%\laila{We have to do the following:
%\begin{itemize}

% \item Change the algorithm. (Done)
% \item Please note that the maximum number of pages should be $20$. {\color{blue} Otherwise, it is possible to exceed 20 pages by specifying in the email the reason why there are more than 20 pages}.
% \item I reordered and merged some subsections in Sections 4 and 5 to improve the flow. 
% Don't worry — I kept the original ordering and organization, so we can easily revert if you don't like the new structure :)
% \end{itemize}}
% {\color{magenta} Jeong: Is Figure 7 in the right place?} \laila{I moved it from subsection 5.2 to the right subsection (5.3) now its figure 9  
% \eqref{fig:dd_vs_full}}

%\YM{In table 2, the Newton solution is the same with $N= 2^{10}$ or with $N=2^n$ ??} {\color{magenta} Jeong: The Newton reference solution is computed on a fixed high-resolution grid with $N=2^{11}$ points and used as the reference for all experiments.}

%\vfill\eject

\section*{Introduction}

%The numerical solution of nonlinear PDEs remains a central challenge in mathematics and computational physics. In his seminal lecture, Feynman suggested that quantum systems could be simulated more efficiently on quantum devices than on classical computers \cite{Feynman1982}, laying the foundation for quantum approaches to scientific computing. Since then, extensive progress has been made in the development of numerical methods on classical computers. Nevertheless, as the complexity of the problem increases, the computational demands in terms of time and memory grow rapidly, often making classical approaches challenging for large-scale or high-dimensional applications.

%Quantum computing has emerged as an alternative paradigm for overcoming these limitations, with the potential to provide exponential speedups for certain problems \cite{Arute2019, Abhijith2018}. However, current devices operate in the noisy intermediate-scale quantum (NISQ) regime \cite{Preskill2018}, characterized by limited qubit numbers, short coherence times, and imperfect gate operations. In this regime, fault-tolerant quantum algorithms involving deep quantum circuits are not yet feasible, and direct quantum implementations of numerical solvers may suffer from loss of accuracy and instability due to hardware noise.

The numerical solution of nonlinear partial differential equations (PDEs) remains a central challenge in mathematics and computational physics. Although substantial progress has been made in the development of classical numerical methods, the computational cost in both time and memory still grows rapidly with problem size and dimensionality, often making large-scale or high-dimensional applications prohibitively expensive.

In this context, quantum computing has emerged as a promising alternative paradigm for scientific computing. The idea can be traced back to Feynman’s seminal lecture, in which he suggested that quantum systems could be simulated more efficiently on quantum devices than on classical computers \cite{Feynman1982}. Building on this perspective, quantum algorithms have since been developed with the potential to provide significant, and in some cases exponential, speedups for specific computational tasks \cite{Arute2019, Abhijith2018}.

However, current quantum hardware operates in the noisy intermediate-scale quantum regime \cite{Preskill2018}, characterized by limited qubit counts, short coherence times, and imperfect gate operations. As a result, fault-tolerant algorithms requiring deep quantum circuits remain out of reach, and direct quantum implementations of numerical solvers may still suffer from reduced accuracy and instability due to hardware noise.

These hardware constraints have led to the development of hybrid quantum–classical algorithms, where a parameterized quantum circuit is trained within a classical optimization loop. Such methods have gained popularity in quantum simulation, optimization, and machine learning. Owing to their simplicity and hardware efficiency, random circuit architectures are often employed as initial ansatz for exploring the quantum state space \cite{McClean2018}. Within this hybrid paradigm, variational quantum algorithms (VQAs) provide a flexible framework that combines the expressive power of quantum circuits with the optimization capabilities of classical routines.

Many nonlinear PDEs admit a variational structure in which solutions arise as critical points of an energy functional. This structure is particularly natural for VQAs, which approximate ground states by minimizing expectation values of parameterized quantum states. Variational quantum approaches for differential equations have recently been proposed in several contexts, including for nonlinear Schrödinger equations and reaction–diffusion systems \cite{Albino2022, GarciaMolina2022, Liu2023, Lubasch2020, Pool2024}. Recent advances in variational quantum optimization also suggest that, on NISQ devices, local or greedy optimization procedures may in some settings be preferable to fully global updates \cite{feniou2025greedy}. The variational structure of these equations therefore makes them particularly attractive for hybrid quantum optimization.

In a VQA, a parametrized quantum circuit encodes candidate solutions, while a classical optimizer iteratively updates parameters by minimizing a cost function that reflects the PDE residual or energy functional. A key obstacle, however, is the barren plateau (BP) phenomenon, where gradients vanish exponentially with the system size, rendering optimization intractable \cite{McClean2018}. BP may arise from several sources, including deep or highly expressive ansatz circuits, concentration of measure effects, hardware noise, and the use of global cost functions \cite{McClean2018, Cerezo2021, Ragone2024}. In particular, the choice of observable plays a critical role: global observables typically induce BP, whereas localized observables can preserve gradient magnitudes and improve trainability \cite{Cerezo2021, Ragone2024}. A comprehensive overview of these mechanisms and mitigation strategies is provided in \cite{Larocca2024ReviewBP}.

In this work, we propose a domain decomposition strategy to mitigate BP in variational quantum solvers by partitioning the computational domain and localizing the cost function, thereby ensuring that the resulting optimization landscape retains sufficient structure to remain trainable. 
This formulation enables the interleaving of subdomain iterations and optimisation steps, leading to reinitialise the local quantum training procedure before BP effects can significantly hinder the minimisation process.
We demonstrate the effectiveness of this approach on the time-independent Gross–Pitaevskii equation (GPE), a nonlinear Schrödinger equation introduced in the context of weakly interacting Bose gases \cite{LandauLifshitz1977}. The remainder of the paper is organized as follows.
Section~\ref{sec:problem_formulation} introduces the GPE, its variational formulation, and the spectral discretization.
Section~\ref{sec:vqa_formulation} presents the variational quantum formulation.
Section~\ref{sec:bp} analyzes the behavior of the full-domain VQA and its scalability.
Section~\ref{sec:dd_strategy} introduces the proposed domain decomposition framework.
Section~\ref{sec:numerical_results} reports numerical experiments comparing the full-domain and domain-decomposed formulations. Finally, the appendix presents several results clarifying the relation between barren plateau effects and the dimension of the Lie algebra generated by the variational circuit, both in the global setting and in the domain-decomposition framework.

\section{Problem Formulation and Discretization Strategy}
\label{sec:problem_formulation}
We consider the time-independent, one-dimensional, periodic GPE with an external potential and cubic nonlinearity:
\begin{equation}
    -\frac{1}{2} \psi''
    + V\,\psi
    + \kappa\,|\psi|^2 \psi
    = \lambda\,\psi, 
    \quad \text{in the weak sense on } H^1(\Omega),
    \label{eq:GPE}
\end{equation}
subject to the normalization constraint
\begin{equation}
    \int_0^{2\pi} |\psi(x)|^2 \, dx = 1.
\end{equation}
Here, $\Omega =\mathbb{R}/2\pi\mathbb{Z}$ is a one-dimensional torus, isomorphic to the unit cell $[0, 2\pi)$ of the lattice $2 \pi\mathbb{Z}$, $V \in L^\infty(\Omega)$ is the external $2\pi$-periodic potential, $\kappa\in\mathbb{R}$ controls the nonlinear interaction strength, and $\lambda$ is the associated chemical potential.  It is well-known that Equation~\eqref{eq:GPE} is the Euler--Lagrange equation of the following constrained energy minimisation problem
\begin{equation}\label{eq:GPE_Variational_1}
    \min_{\psi\in H^1(\Omega),\, \|\psi\|_{L_2}=1}
    E(\psi),
\end{equation}
where $E\colon H^1(\Omega) \rightarrow \mathbb{R}$ is the Gross-Pitaevskii energy functional given by
\begin{equation}\label{eq:GPE_Variational_2}
    E(\psi)
    = \frac12 \int_0^{2\pi} |\psi'(x)|^2 \, dx
    + \int_0^{2\pi} V(x) |\psi(x)|^2 \, dx
    + \frac{\kappa}{2} \int_0^{2\pi} |\psi(x)|^4 \, dx.
\end{equation}
The first, second, and third terms in the energy functional $E$ correspond to kinetic, potential, and interaction energies, respectively. In the sequel we will adopts the variational formulation \eqref{eq:GPE_Variational_1}-\eqref{eq:GPE_Variational_2} of the time-independent GPE because our objective is to approximate ground states via variational quantum optimization. 
%This, of course, involves training a parameterized quantum state by minimizing an energy expectation value. 
Note that the variational structure above is particularly well suited for VQAs, since the wavefunction $\psi$ can be interpreted as a normalized quantum state-vector and the energy $E(\psi)$ corresponds to the expectation value of the Hamiltonian.

%The functional $E$ therefore serves directly as the objective function in the hybrid quantum–classical framework.

%that the ground state (i.e., lowest eigenfunction) of this equation can be characterized as a constrained minimizer of the associated Gross-Pitaevskii energy functional. 

%This perspectiveWe adopt this variational formulation because our objective is to approximate ground states via variational quantum optimization In the variational quantum setting, a parameterized quantum state is trained by minimizing an energy expectation value. 

\subsection{Spectral Discretization}
We consider a uniform discretization of the periodic interval $\Omega=[0,2\pi)$. Since the discretized solution is intended to be represented on a quantum computer, we choose the number of grid points as $N=2^n$, so that the grid can be naturally encoded on $n$ qubits. The grid points are then given by
%We consider a uniform discretization of the periodic interval $\Omega=[0,2\pi)$ with $N=2^n$ grid points,
\[
x_j = j\,\Delta x,\qquad j=0,1,\dots,N-1,\qquad \Delta x = \frac{2\pi}{N}.
\]
The discrete wavefunction is represented by the vector of grid point-values
$\bm{\psi}=(\psi_j)_{j=0}^{N-1}\in\mathbb{C}^N$ and satisfies the discrete normalization
\begin{equation}\label{eq:disc_norm}
\Delta x \sum_{j=0}^{N-1} |\psi_j|^2 = 1.
\end{equation}
%In the Fourier pseudo-spectral setting, we identify $\bm{\psi}$ with the unique
%$2\pi$-periodic trigonometric polynomial (Fourier interpolant) $\psi_N\in V_N$ such that
%$\psi_N(x_j)=\psi_j$ for all $j$, where
%\[
%V_N := \mathrm{span}\{e^{i\xi_\ell x}:\ \ell\in\mathcal{I}_N\},\qquad
%\mathcal{I}_N=\left\{-\frac{N}{2},\dots,\frac{N}{2}-1\right\},\qquad
%\xi_\ell=\ell.\]
In the Fourier pseudo-spectral setting, we identify $\bm{\psi}$ with the
$2\pi$-periodic trigonometric polynomial (Fourier interpolant)
$\psi_N\in V_N$ such that
\[
\psi_N(x_j)=\psi_j \qquad \text{for all } j,
\]
where
\[
V_N := \mathrm{span}\Bigl(\{e^{i\ell x}\}_{\ell\in\mathcal{I}'_N}\cup\{ \cos(Nx/2)\}\Bigr),
\qquad
\mathcal{I}'_N=\left\{-\frac{N}{2}+1,\dots,\frac{N}{2}-1\right\}, \quad \mathcal{I}_N = \mathcal{I}'_N\cup 
\left\{\frac{N}{2}\right\}.
\]
where the additional mode 
$\cos(Nx/2)$ accounts for the Nyquist frequency. Equivalently, letting $\widehat{\psi}_\ell$ be the DFT coefficients of the grid values $\{\psi_j\}_{j=0}^{N-1}$,
the interpolant admits the Fourier series representation
\begin{equation}\label{eq:interp_series}
\psi_N(x)= \sum_{\ell\in\mathcal{I}'_N}\widehat{\psi}_\ell\,e^{i\ell x} + \sqrt{2} \widehat{\psi}_{\frac{N}{2}} \cos(Nx/2).%\psi_N(x)=\frac{1}{\sqrt{N}}\sum_{\ell\in\mathcal{I}_N}\widehat{\psi}_\ell\,e^{i\ell x}.
\end{equation}
%$$\psi_N(x)= \sum_{\ell\in\mathcal{I}'_N}\widehat{\psi}_\ell\,e^{i\ell x} + \widehat{\psi}_{\frac{N}{2}} \cos(Nx/2).\hskip 7truecm (6')$$

Under this representation, the second derivative operator $\partial_{xx}$ is diagonal in Fourier space with eigenvalues $-\ell^2$.
Consequently, for the interpolant $\psi_N$, the kinetic energy satisfies
\begin{equation}\label{eq:kinetic_identity}
\frac12 \int_0^{2\pi} |\psi_N'(x)|^2 \, dx
=
\pi
\sum_{\ell\in\mathcal{I}_N}
|\ell|^2
|\widehat{\psi}_\ell|^2.
\end{equation}
In implementations based on real FFTs one may equivalently use a sine/cosine representation; for conciseness we use the complex exponential notation in \eqref{eq:interp_series}. The potential and interaction terms are evaluated by pointwise quadrature on the uniform grid. The resulting discrete energy functional is
\begin{equation}
E_n(\bm{\psi})
=
\pi 
\sum_{\ell} |\ell|^2\,|\widehat{\psi}_\ell|^2
+
\Delta x \sum_{j=0}^{N-1} V(x_j)\,|\psi_j|^2
+
\frac{\kappa\,\Delta x}{2}
\sum_{j=0}^{N-1} |\psi_j|^4.
\label{eq:EXN}
\end{equation}
The finite-dimensional constrained minimization problem becomes
\begin{equation}
\min_{\bm{\psi}\in\mathbb{C}^N}
E_n(\bm{\psi})
\quad
\text{subject to}
\quad
\Delta x \sum_{j=0}^{N-1} |\psi_j|^2 = 1.
\end{equation}

% Introducing a discrete Lagrange multiplier $\lambda_h$, the first-order optimality condition yields the nonlinear eigenvalue problem
% \begin{equation}
% -\frac12 D_2 \bm{\psi}
% + V \odot \bm{\psi}
% + \kappa |\bm{\psi}|^2 \odot \bm{\psi}
% =
% \lambda_h \bm{\psi},
% \end{equation}
% where $D_2$ denotes the spectral discretization of the second derivative operator and $\odot$ indicates pointwise multiplication \YM{the interpretation of $D_2$ is not clear since $\psi$ is not well defined}.

\section{Variational Quantum Formulation}
\label{sec:vqa_formulation}

In this section, we describe the variational quantum formulation
used to approximate the ground state of the discrete energy functional
$E_n$ defined in \eqref{eq:EXN}.
Roughly speaking, the discrete wavefunction $\bm{\psi} \in \mathbb{C}^N$
is encoded as the state-vector generated by a parameterized quantum circuit,
and the energy is minimized with respect to the circuit parameters. Let the spatial domain be discretized into $N$ grid points.
In the variational setting, the discrete wavefunction depends on the circuit parameters and is written as
\[
\bm{\psi}(\bm{\theta})
=
(\psi_0(\bm{\theta}), \dots, \psi_{N-1}(\bm{\theta}))^\top
\in \mathbb{C}^N \qquad \text{for some parameters}\qquad \bm{\theta} \in [0, 2\pi)^m.
\]
The normalization constraint
\[
\Delta x \sum_{j=0}^{N-1}
|\psi_j(\bm{\theta})|^2
= 1
\]
is equivalent to the Euclidean normalization of the rescaled vector
\[
\bm{\phi}(\bm{\theta})
:=
\sqrt{\Delta x}\,\bm{\psi}(\bm{\theta}),
\qquad
\|\bm{\phi}(\bm{\theta})\|_2 = 1.
\]
Thus, $\bm{\phi}(\bm{\theta})$ may be regarded as a quantum state-vector
in a Hilbert space of dimension $N = 2^n$, encoded on $n$ qubits. Under this identification, the discrete energy functional is evaluated at the parameterized wavefunction $\psi(\theta)$ : 
$E_n(\psi(\theta))$,
where $E_n$ is defined in \eqref{eq:EXN}.
In the variational framework, the state-vector is restricted to the manifold
generated by a parameterized unitary circuit,
\[
|\bm{\phi}(\bm{\theta})\rangle
=
U(\bm{\theta})|0\rangle,
\qquad
\bm{\theta} \in [0, 2\pi)^m .
\]
The ground-state computation is therefore reformulated
as the parameter optimization problem
\begin{equation}
\label{eq:global-vqa}
\bm{\theta}^\star
=
\underset{\bm{\theta}\in [0, 2\pi)^m}{\text{\rm argmin}}
\mathcal{C}(\bm{\theta}),
\qquad \text{with} \quad
\mathcal{C}(\bm{\theta}) = {\mathfrak E}_n(\bm{\phi}(\bm{\theta}))
\equiv
E_n\big(\bm{\psi}(\bm{\theta})\big)   .
\end{equation}
To parameterize the state manifold, we adopt the hardware-efficient ansatz introduced in \cite{Kocher2025} and illustrated in Figure~\ref{fig:ansatz_n5},
which was successfully applied to the time dependant GPE in the context of Bose-
Einstein condensation within a hybrid pseudospectral–variational quantum framework.
The circuit consists of  $d=d(n)$ entangling layers, where the circuit depth $d$ is chosen as a function of the number  $n$ of qubits involved in the circuit.
Each layer applies single-qubit rotations
\[
R_x(\theta_x^{(\ell,q)}),
\quad
R_z(\theta_z^{(\ell,q)})
\]
to every qubit $q = 0, \dots, n-1$,
followed (for $\ell < d$) by a ring of CNOT gates connecting nearest neighbors together with an additional CNOT between the last and first qubits.
The final layer consists only of rotations.
The total number of parameters is thus
\[
m = 2n(d+1). 
\]

\begin{figure}[h!]
\centering
\begin{quantikz}[row sep=0.4cm, column sep=0.4cm]
\qw & \gate{R_x(\theta_{x}^{(0,0)})} & \gate{R_z(\theta_{z}^{(0,0)})} & \ctrl{1} \gategroup[wires=5,steps=7,style={solid,draw=black,inner sep=6pt},label style={yshift=0.2cm}]{$\times$ d} & \qw & \qw & \qw & \targ{} & \gate{R_x(\theta_{x}^{(\ell,0)})} & \gate{R_z(\theta_{z}^{(\ell,0)})} & \qw \\
\qw & \gate{R_x(\theta_{x}^{(0,1)})} & \gate{R_z(\theta_{z}^{(0,1)})} & \targ{} & \ctrl{1} & \qw & \qw & \qw & \gate{R_x(\theta_{x}^{(\ell,1)})} & \gate{R_z(\theta_{z}^{(\ell,1)})} & \qw \\
\qw & \gate{R_x(\theta_{x}^{(0,2)})} & \gate{R_z(\theta_{z}^{(0,2)})} & \qw & \targ{} & \ctrl{1} & \qw & \qw & \gate{R_x(\theta_{x}^{(\ell,2)})} & \gate{R_z(\theta_{z}^{(\ell,2)})} & \qw \\
\qw & \gate{R_x(\theta_{x}^{(0,3)})} & \gate{R_z(\theta_{z}^{(0,3)})} & \qw & \qw & \targ{} & \ctrl{1} & \qw & \gate{R_x(\theta_{x}^{(\ell,3)})} & \gate{R_z(\theta_{z}^{(\ell,3)})} & \qw \\
\qw & \gate{R_x(\theta_{x}^{(0,4)})} & \gate{R_z(\theta_{z}^{(0,4)})} & \qw & \qw & \qw & \targ{} & \ctrl{-4} & \gate{R_x(\theta_{x}^{(\ell,4)})} & \gate{R_z(\theta_{z}^{(\ell,4)})} & \qw
\end{quantikz}
\caption{Hardware-efficient ansatz for $5$ qubits.
Each entangling layer consists of single-qubit rotations
followed by a ring of CNOT gates.}
\label{fig:ansatz_n5}
\end{figure}
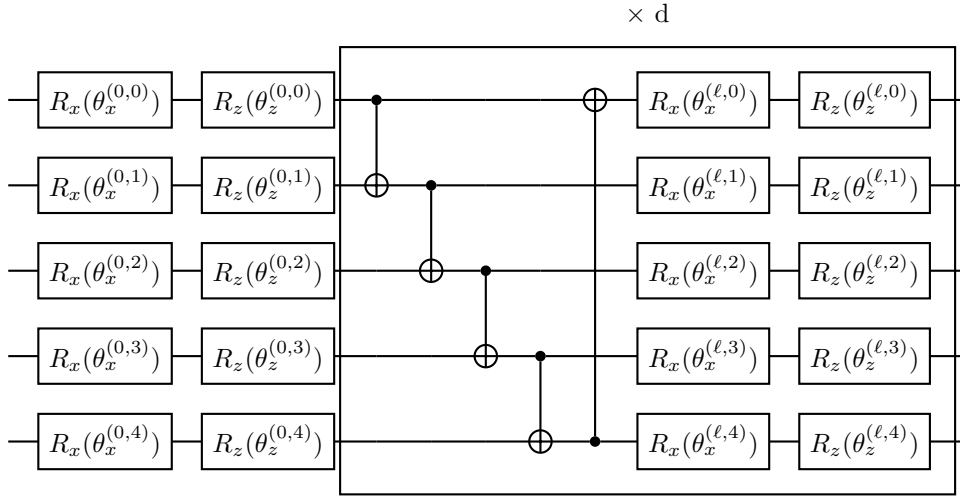
In the present work, all circuits are simulated at the state-vector level.
Gradients with respect to $\bm{\theta}$ are computed exactly within the classical simulator,
and a quasi-Newton method (BFGS) is employed to minimize $\mathcal{C}(\bm{\theta})$.
The resulting optimization problem is nonlinear and nonconvex,
and its performance depends critically on the structure of the landscape $\mathcal{C}(\bm{\theta})$. We therefore begin by examining the optimization dynamics of the full-domain formulation.

\section{Full-Domain Behavior and Barren Plateaus}
\label{sec:bp}

We analyze the global (full-domain) VQA formulation \eqref{eq:global-vqa}
under the periodic potential
\begin{equation}
\label{potential}
    V(x)=1-\cos(x),
\end{equation}
on the periodic domain $\Omega=[0,2\pi)$ and study its scalability as the system size $N=2^n$ increases. 
We conduct numerical experiments using $n=7,8,$ and $9$ qubits with circuit depths 
$d=100,200,$ and $400$, respectively\footnote{In the absence of a universal prescription, we take $d=d(n)$ to scale moderately with $n$.}. 
Figure~\ref{fig:full_domain_errors} shows the optimization dynamics of the full-domain formulation as a function of the number of training steps.

\begin{figure}[h!]
\centering
\includegraphics[width=\textwidth, trim={0cm 9cm 0cm 0cm}, clip]
{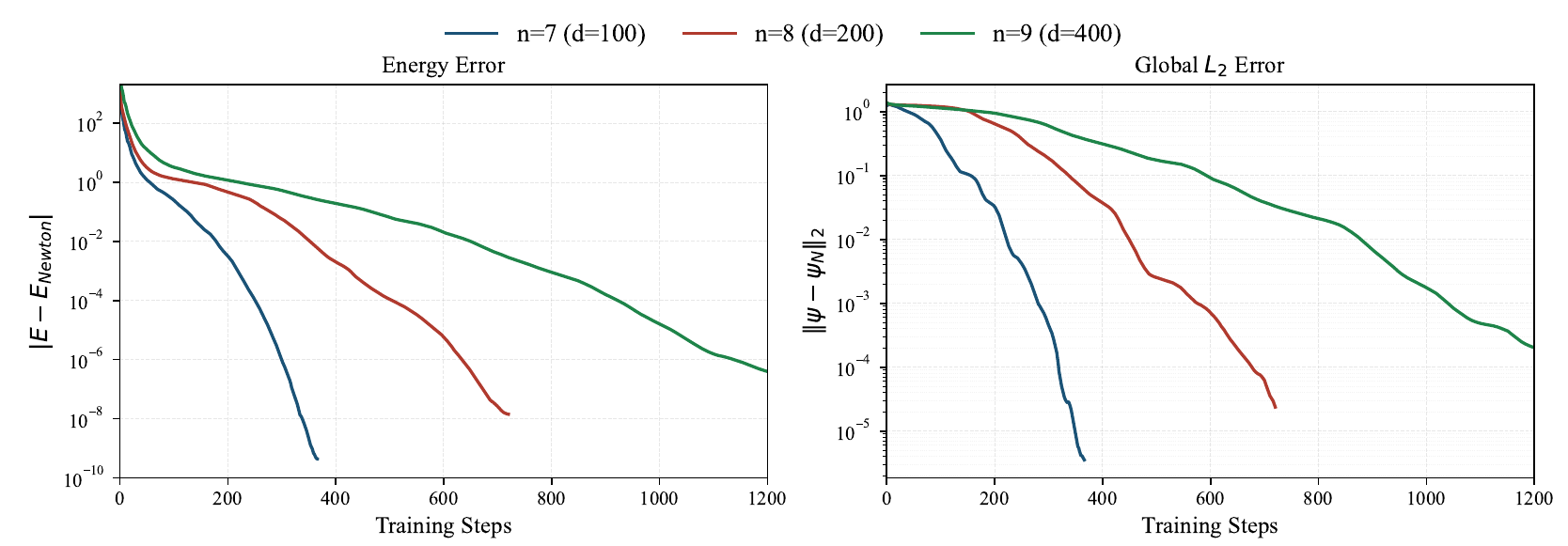}
\includegraphics[width=0.53\textwidth, trim={0cm 0cm 14cm 9.7cm}, clip]
{figures/full_domain_summary_final_2x2.pdf}
\caption{
Full-domain training dynamics for $n=7,8,9$. Top left: energy error $|E-E_{\rm Newton}|$ relative to the Newton reference. Top-right: wavefunction error $\|\psi-\psi_N\|_2$. Bottom: Relative energy change between consecutive iterations for the full-domain formulation.
The BFGS stopping tolerance is set to $10^{-20}$.
}
\label{fig:full_domain_errors}
\end{figure}

% \begin{figure}[h!]
% \centering
% \includegraphics[width=0.5\textwidth, trim={0cm 0cm 14cm 9.7cm}, clip]
% {macro-latex-proc/figures/full_domain_summary_final(2x2).pdf}
% \caption{
% }
% \label{fig:rel_energy_change}
% \end{figure}

A qualitative change is observed in the optimization dynamics: as $n$ increases, the decrease in the energy error becomes progressively slower and the energy curve exhibits a significantly flatter profile. This indicates an increasing difficulty in identifying effective descent directions, a behavior reminiscent of BP phenomena observed in large-scale quantum circuits. 

We believe that the ``relative energy change'' curves  are particularly informative in this regard, as they exhibit three distinct regimes: an initial decay phase (for training steps up to approximately 100), followed by a common plateau region (at the level of $10^{-2}$) whose extent increases with $n$ (roughly $[100,150]$ for $n=7$, $[100,250]$ for $n=8$, and $[100,600]$ for $n=9$), and finally a second decay phase. 

Although the structured ansatz adopted here \cite{Kocher2025} (see Figure~\ref{fig:ansatz_n5}) is tailored to nonlinear Schrödinger-type problems, the resulting global optimization over the full parameter space becomes increasingly demanding as the number of qubits grows. 
As the spatial resolution increases, the number of qubits required to represent the discretized problem grows, thereby enlarging the Hilbert-space dimension. As discussed in \cite{McClean2018,Cerezo2021}, sufficiently expressive circuits may exhibit exponentially vanishing gradient variance as the Hilbert-space dimension grows. They can more generally be associated with exponentially small loss differences \cite{Larocca2024ReviewBP}. From this perspective, our ``relative energy change'' between successive training steps provides an empirical proxy for the onset of a BP. This three-stage behavior --- initial decay, extended plateau, and subsequent decay --- to the best of our knowledge, does not appear to have been explicitly highlighted in the literature.

This observation motivates the development of localized optimization strategies that reduce the effective optimization dimension while preserving the physical structure of the underlying PDE. In Section~\ref{sec:dd_strategy}, we introduce a domain-decomposition approach designed to achieve this goal.

\section{Domain Decomposition Strategy}
\label{sec:dd_strategy}
The full-domain formulation \eqref{eq:global-vqa}
optimizes a single parameter vector $\bm{\theta}\in\mathbb{R}^p$
to represent the entire discretized wavefunction.
In this global setting, all degrees of freedom contribute
simultaneously to the energy functional,
resulting in strong global coupling among parameters. As illustrated in the previous section, such global expressivity promotes gradient concentration
and BP-like flatness as the system size increases. To mitigate this scalability bottleneck, as suggested in \cite{Cerezo2021} where the authors propose to localize the cost function,
we introduce a domain decomposition framework
that partitions the spatial domain into localized subdomains.
Rather than optimizing a single global parameter vector,
we perform sequential updates on smaller interacting parameter blocks,
thereby reducing long-range parameter correlations
and modifying the effective optimization geometry.

%The distinction between classical and variational quantum domain decomposition is essential. 
Even though the basic concept is the same—dividing the initial domain into overlapping subdomains—the ``variational quantum'' implementation of the domain decomposition algorithm is fundamentally different from the ``classical'' implementation. In classical methods, the wavefunction is explicitly represented in physical space, and subdomain updates directly modify pointwise solution values.
These updates are retained, so each iteration builds cumulatively upon these previously computed physical values. In contrast, within a VQA framework,
the wavefunction is implicitly defined by a parameterized quantum circuit.
Subdomain updates modify circuit parameters rather than physical degrees of freedom.
Consequently, one cannot resume a subdomain computation from stored pointwise values;
only parameter configurations can be transferred between iterations.
This structural constraint fundamentally changes how domain decomposition operates in the quantum setting. In addition, the effectiveness of quantum domain decomposition relies on reducing the Hilbert space dimension and parameter complexity of each subproblem.
If the global discretization uses $N=2^n$ grid points (requiring $n$ qubits),
each subdomain should contain fewer points. Since the number of points (represented by qubits) has to be a power of $2$, this means that it will use, at most $2^{n-1}$ points, based on $n-1$ qubits.
%of size $N/2$ requires only $n-1$ qubits.
As discussed earlier, since gradient variance deteriorates with increasing Hilbert space dimension, reducing the number of qubits improves trainability. Partitioning the domain into only two overlapping subdomains, however,
is not possible. This implies that we must introduce, at least, three overlapping subdomains that allows to further localize the optimization
and improve separation between parameter blocks. %This motivates the three subdomains construction described below.

\subsection{Three-Subdomain Decomposition and Local Quantum Ansatz}

We consider a uniform discretization of the periodic interval
$\Omega=[0,2\pi)$ with
$N = 2^n$
grid points and mesh size $\Delta x = 2\pi/N$.
The global discrete wavefunction
\begin{equation}\label{eq:global_wavefucntion}
\bm{\psi}=(\psi_0,\dots,\psi_{N-1})^\top \in \mathbb{C}^N    
\end{equation}
is represented using $n$ qubits and satisfies the discrete normalization constraint
\begin{equation}
\Delta x \sum_{j=0}^{N-1} |\psi_j|^2 = 1.
\label{eq:global_norm}
\end{equation}
To localize the optimization, we partition the index set
$\{0,\dots,N-1\}$ into $K=3$ overlapping subsets
\[
\mathcal{I}_1,\ \mathcal{I}_2,\ \mathcal{I}_3
\subset
\{0,\dots,N-1\},
\]
such that
\begin{equation}
\bigcup_{k=1}^{3} \mathcal{I}_k
=
\{0,\dots,N-1\}.
\label{eq:subdomain_cover}
\end{equation}
 and each subset contains at most -- and in our case we choose exactly -- $
|\mathcal{I}_k| = 2^{n-1} = \frac{N}{2}$ consecutive indices (modulo $N$)\footnote{
Point indices are considered modulo 
$N$, so that index $0$ comes immediately after index $N-1$; similarly subdomain indices are considered modulo $3$.}.
Consequently, each local subproblem, with degrees of freedom being the  values $\psi_j, j\in \mathcal{I}_k$ for $k=1, 2, 3$   can be represented using only $(n-1)$ qubits,
reducing the Hilbert space dimension from $2^n$ to $2^{n-1}$.
As discussed in Section~\ref{sec:bp},
since gradient variance in variational quantum circuits deteriorates with increasing Hilbert space dimension,
this qubit reduction improves trainability.

Because
\[
3 \cdot 2^{n-1} > 2^n,
\]
the subdomains necessarily overlap.
The total overlap cardinality is therefore
\[
3 \cdot 2^{n-1} - 2^n
=
2^{n-1}.
\]

Assuming pairwise overlaps only, we define
\[
|\mathcal{I}_1 \cap \mathcal{I}_2| = n_1, \quad
|\mathcal{I}_2 \cap \mathcal{I}_3| = n_2, \quad
|\mathcal{I}_3 \cap \mathcal{I}_1| = n_3,
\]
with
\[
n_1 + n_2 + n_3 = 2^{n-1}.
\]

To distribute the overlap as uniformly as possible while preserving integer cardinalities for all $n$, we define
\begin{equation}
n_1 = n_2 = \frac{2^{n-1}+(-1)^n}{3},
\qquad
n_3 = \frac{2^{n-1}+(-1)^n}{3} + (-1)^{\,n-1}.
\end{equation}

This construction defines three overlapping subdomains on the periodic torus.
Figure~\ref{fig:three_subdomain_pointwise} illustrates the decomposition in index space,
while Figure~\ref{fig:three_subdomain_circle} shows the corresponding geometric representation on $\Omega=[0,2\pi)$ for $5$ qubits\footnote{Note that our choice of having $|\mathcal{I}_k| = 2^{n-1}$ for $k=1, 2, 3$ leads to a subdomain definition that depends slightly on $n$ with no consequence on the algorithm. If we want to avoid this dependance we can of course rather choose to have fewer points on each fixed size subdomains.}.

\begin{figure}[h!]
\centering
\includegraphics[
width=0.6\textwidth,
trim=0cm 0.8cm 0cm 0cm,
clip]{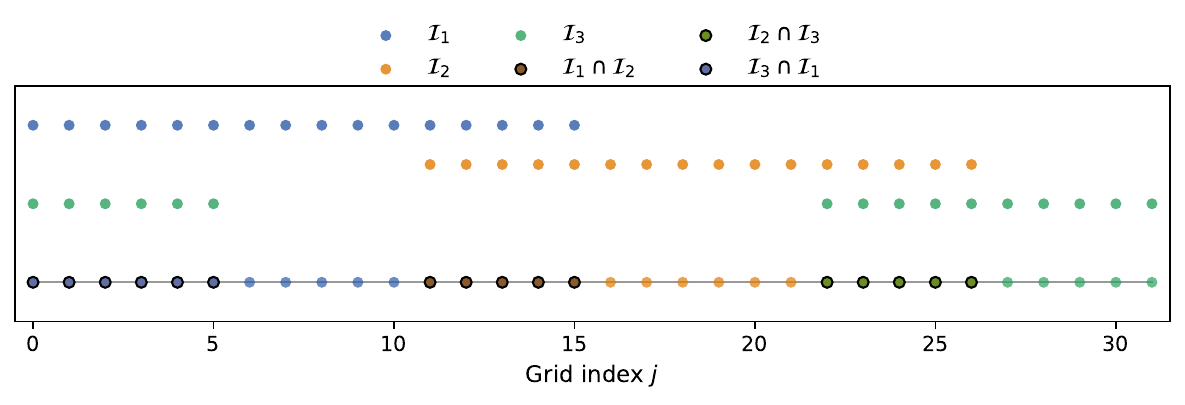}
\caption{Three overlapping subdomains in index space.
The rows show the indices belonging to the subdomains
$\mathcal{I}_1$, $\mathcal{I}_2$, and $\mathcal{I}_3$.
The bottom row displays the full grid index set,
with colored markers highlighting the overlap regions.}
\label{fig:three_subdomain_pointwise}
\end{figure}

\begin{figure}[h!]
\centering
\includegraphics[
width=0.6\textwidth,
trim=0cm 1.7cm 0cm 0.3cm,
clip
]{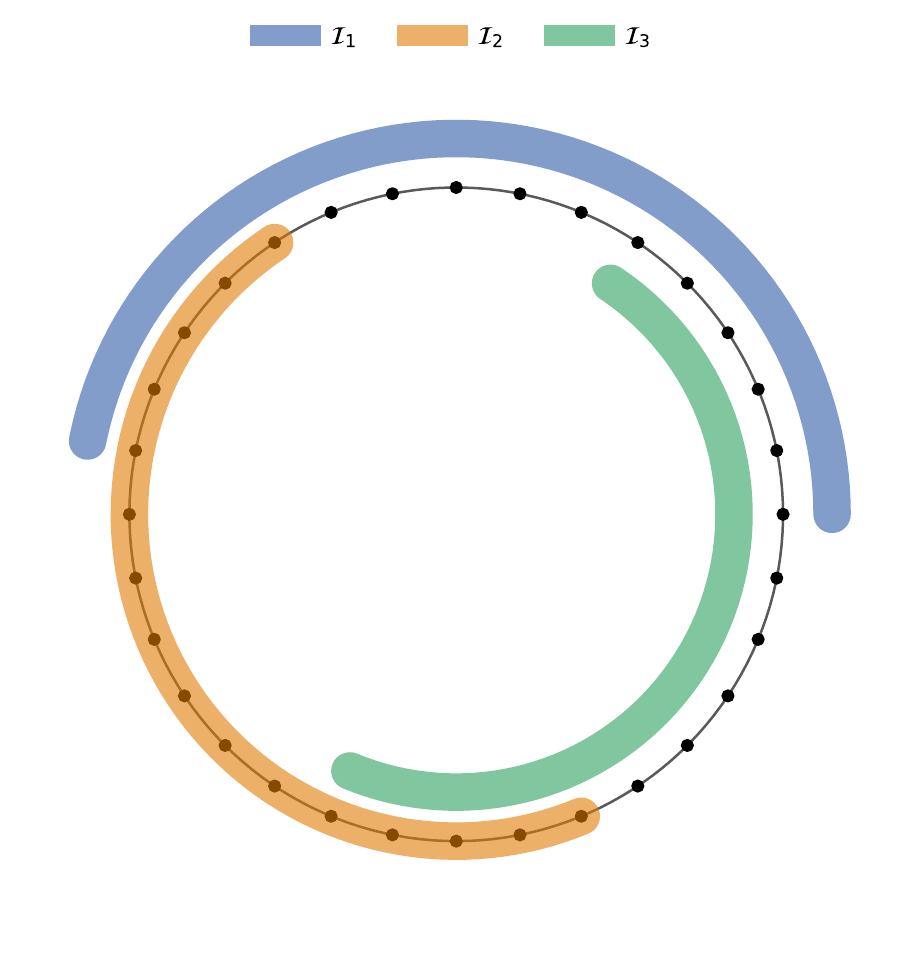}

\caption{Geometric illustration of three overlapping subdomains on the periodic domain
$\Omega=[0,2\pi)$. Black points represent the discrete grid nodes, while the arcs
indicate the three index subsets $\mathcal{I}_1$, $\mathcal{I}_2$, and $\mathcal{I}_3$.}

\label{fig:three_subdomain_circle}
\end{figure}

For each subdomain indexed by $k$, we introduce a local parameter vector
\[
\bm{\theta}^{(k)} \in [0, 2\pi)^{p_{\mathrm{loc}}},
\qquad
p_{\mathrm{loc}}\in \mathbb{N}
\]
together with a localized quantum ansatz
\begin{equation}
\bm{\phi}^{(k)}(\bm{\theta}^{(k)})
\in \mathbb{C}^{|\mathcal{I}_k|},
\end{equation}
obtained from a state-vector-level simulation of a quantum circuit.
By construction,
\[
\|\bm{\phi}^{(k)}(\bm{\theta}^{(k)})\|_2 = 1.
\]

Note however, that in the discretized PDE setting, it is the global wave-function $\bm{\psi} \in \mathbb{C}^N$ that must satisfy the discrete normalization condition \eqref{eq:global_norm}. We now describe a simple strategy to construct a globally normalized update from a localized quantum ansatz $\bm{\phi}^{(k)}$.

Assume we are given a global wave-function  $\bm{\psi}^{\rm old}\in\mathbb{C}^N$
that satisfies the global normalization constraint \eqref{eq:global_norm}, i.e.,
\begin{align*}
  \Delta x\Vert   \bm{\psi}^{\rm old}\Vert_{2} =\Delta x\sum_{\ell=0}^{N-1} \vert \bm{\psi}^{\rm old}_{\ell}\vert^2= 1.
\end{align*}

Let $\mathcal{I}_k \subset \{0,\dots,N-1\}$ be a contiguous set of grid indices corresponding to subdomain $k$. 
Denote its leftmost and rightmost indices by
\[
b_L := \min \mathcal{I}_k,
\qquad
b_R := \max \mathcal{I}_k,
\]
and define
\[
\partial \mathcal{I}_k := \{ b_L, b_R \}, 
\qquad
\mathcal{I}_k^\circ := \mathcal{I}_k \setminus \partial \mathcal{I}_k,
\qquad
\mathcal{I}_k^c := \{0,\dots,N-1\} \setminus \mathcal{I}_k.
\]
Assume now that we have computed -- using, e.g., some optimization procedure -- a localized quantum state
\begin{align*}
    \bm{\phi}^{(k)} = \{{\phi}_\ell^{(k)}\}_{\ell \in \mathcal{I}_k} \in\mathbb{C}^{\vert \mathcal{I}_k\vert} 
\end{align*}
that satisfies the local normalisation constraint
\begin{align*}
     \Vert   \bm{\phi}^{(k)} \Vert_{2} =\sum_{\ell\in \mathcal{I}_k}\vert {\phi}^{(k)}_{\ell}\vert^2= 1,
\end{align*}
and we wish to use the localized quantum state $\bm{\phi}^{(k)}$ to update the global wave-function $\bm{\psi}^{\rm old}$. Notice that we cannot simply define the updated global wave-function $\bm{\psi}^{\rm new}\in \mathbb{C}^{N}$ as taking values $\bm{\psi}^{\rm old}_{\ell}$ on indices $\ell \in \mathcal{I}_k^{\rm c}$ and taking values $\bm{\phi}^{(k)}_{\ell}$ on indices $ \ell \in \mathcal{I}_k$ since the resulting wave-function does not satisfy the global normalization constraint \eqref{eq:global_norm}. It is useful to express the above construction via a so-called ``embedding operator''  $\mathcal{E}_k :
\mathbb{C}^N\times \mathbb{C}^{|\mathcal{I}_k|} 
\longrightarrow
\mathbb{C}^N$ by setting
\begin{align}\label{eq:embedding}
   \mathcal{E}_k(\bm{\psi}^{\rm old}, \bm{\phi}^{(k)})  :=\begin{cases}
    \alpha(\bm{\psi}^{\rm old}, \bm{\phi}^{(k)})\bm{\phi}^{(k)}_{\ell} \quad &\text{for } \ell \in \mathcal{I}^{\circ}_k\\
        \bm{\psi}^{\rm old}_\ell \quad &\text{for } \ell \in \mathcal{I}_k^{\rm c} \cup \partial \mathcal{I}_{k}.
    \end{cases}
\end{align}
where the scaling factor $\alpha(\bm{\psi}^{\rm old}, \bm{\phi}^{(k)}) \in \mathbb{R}$, that depends on  $ \bm{\psi}^{\rm old}\in \mathbb{C}^N$ and $\bm{\phi}^{(k)}\in \mathbb{C}^{\vert \mathcal{I}_k\vert}$ is defined by:
\begin{align}\label{eq:scaling_factor}
    \alpha(\bm{\psi}^{\rm old}, \bm{\phi}^{(k)})=  \sqrt{\Delta x}\sqrt{\frac{\sum_{\ell \in\mathcal{I}_k^{\circ}} |\bm{\psi}^{\rm old}_\ell|^2}{
\sum_{\ell \in \mathcal{I}_k^{\circ}}
\left|\bm{\phi}^{(k)}_{\ell}
\right|^2}}
\end{align}
With this notation, we thus propose to define $
    \bm{\psi}^{\rm new} = \mathcal{E}_k(\bm{\psi}^{\rm old}, \bm{\phi}^{(k)}) .$
Note that this definition (through \eqref{eq:embedding})  ensures consistency between the initial global wave-function $\bm{\psi}^{\rm old}\in \mathbb{C}^{N}$ and the update $\bm{\psi}^{\rm new} \in \mathbb{C}^{N}$ at the `boundary' grid-points corresponding to indices in $\partial\mathcal{I}_k$. Moreover, $\bm{\psi}^{\rm new}$ satisfies the global normalization constraint \eqref{eq:EXN} by construction.

% To do so, it will be useful to introduce the vector
% \begin{equation}
% \bm{\psi}^{(p)}(\bm{\theta}^{(p)})
% =
% \frac{\bm{\phi}^{(p)}(\bm{\theta}^{(p)})}{\sqrt{\Delta x}}.
% \label{eq:psi_raw}
% \end{equation}
% Note that $\tilde{\bm{\psi}}^{(p)}$ is the localized analogue of the global discrete wave-function $\bm{\psi} \in \mathbb{C}^N$ (recall Equation \eqref{eq:global_wavefucntion}) and it satisfies
% \begin{align}\label{eq:local_norm}
%   \Vert   \bm{\psi}^{(p)}(\bm{\theta}^{(p)}) \Vert_{2} = \frac{1}{\Delta x}\sum_{\ell=0}^{\vert \mathcal{I}_p \vert -1} \vert \bm{\phi}^{(p)}_{\ell}(\bm{\theta}^{(p)})\vert^2= \frac{1}{\Delta x}.
% \end{align}

% in order to define the new global solution $\widetilde{\bm{\phi}}^{(p)}$ $\tilde{\bm{\psi}}^{(p)}$ cannot directly be combined with the If this raw subdomain were inserted directly into the global state
% while keeping the remaining indices fixed,
% the discrete normalization constraint \eqref{eq:global_norm}
% would generally be violated,
% since only a subset of degrees of freedom is modified.
% A consistent redistribution of $L^2$ mass is therefore required,
% which motivates the embedding construction described next.

\subsection{Sequential Subdomain Optimization}

In the sequel, the localized quantum state $\bm{\phi}^{(k)}= \bm{\phi}^{(k)}(\bm{\theta}^{(k)})$ will often be parameterised by some $\bm{\theta}^{(k)} \in [0, 2\pi)^{p_{\rm loc}}$ that is obtained via minimising a local energy functional on each subdomain. For notational convenience therefore, we shall frequently use the short-hand notation 
\begin{align*}
    \bm{\psi}^{(k)}(\bm{\theta}^{(k)}; \bm{\psi}):= \mathcal{E}_k(\bm{\psi}, \bm{\phi}^{(k)}(\bm{\theta}^{(k)})).
\end{align*}
The associated local energy functional on subdomain $k$ is given by
\begin{equation}
\mathcal{C}_k(\bm{\theta}^{(k)};\bm{\psi})
=
E_n\!\left(
\bm{\psi}^{(k)}(\bm{\theta}^{(k)};\bm{\psi})
\right),
\label{eq:subdomain_cost_siam}
\end{equation}
and the local optimization problem reads
\begin{equation}
\bm{\theta}^{(k),\star}
=
\arg\min_{\bm{\theta}^{(k)}\in\mathbb{R}^{p_{\mathrm{loc}}}}
\mathcal{C}_k(\bm{\theta}^{(k)};\bm{\psi}).
\label{eq:subdomain_argmin}
\end{equation}

% {\color{blue} Hassan: Would it make sense to move the above embedding construction to the previous subsection and rename this section to domain decomposition algorithm or similar?}\laila{I moved the definition of the embedding operator $\mathcal{E}_k$ and the associated normalization scaling to the previous subsection, since this part defines the construction used to insert a localized quantum state into the global wave-function.
% I kept the definition of the local state $\bm{\psi}^{(k)}(\bm{\theta}^{(k)};\bm{\psi})$, the cost functional $\mathcal{C}_k(\bm{\theta}^{(k)};\bm{\psi})$, and the corresponding minimization problem in the subsequent subsection, as these elements belong to the optimization step of the algorithm rather than to the embedding construction.}

Equipped with all of the ingredients necessary, we now define our proposed VQA domain decomposition strategy. Prior to presenting numerical results, let us briefly describe this algorithm. 

The global problem is solved through solving a sequence of local problems. A sweep, indexed by $s=0,1,2,\dots$, consists of one complete cycle in which all subdomains
$k=1,\dots,3$ are updated sequentially. Let $\bm{\psi}^{0}\in\mathbb{C}^N$ be an initial globally normalized state.
In the numerical experiments we initialize with the uniform constant state
\[
\psi^{0}_j = \frac{1}{\sqrt{2\pi}},
\qquad j=0,\dots,N-1.
\]
For a given sweep $s$, define the intra-sweep iterates
\[
\bm{\psi}^{s,0} := \bm{\psi}^{s},
\]
then, for each subdomain $k=1,\dots,3$, compute an approximate solution of
\begin{equation}
\bm{\theta}^{(k),\star}
\approx
\arg\min_{\bm{\theta}^{(k)}\in\mathbb{R}^{k_{\mathrm{loc}}}}
\mathcal{C}_k\bigl(\bm{\theta}^{(k)};\bm{\psi}^{s,k-1}\bigr),
\label{eq:block_update}
\end{equation}
and update the global state via
\begin{equation}
\bm{\psi}^{s,k}
=
\mathcal{E}_k\!\left(
\bm{\psi}^{s,k-1},
\tilde{\bm{\psi}}^{(k)}(\bm{\theta}^{(k),\star})
\right).
\label{eq:block_embedding}
\end{equation}
After all $K=3$ subdomains have been updated, we define
\[
\bm{\psi}^{s+1} := \bm{\psi}^{s,K},
\]
and proceed to the next sweep. Each subdomain update preserves the global normalization by construction,
since the embedding operator enforces the exact mass allocation
 at every step. Consequently, all intermediate iterates $\bm{\psi}^{s,k}$ remain feasible,
and the sequential subdomain updates generate a normalized sequence
$\{\bm{\psi}^s\}_{s\ge 0}$.
The full domain-decomposition procedure described above is summarized in Algorithm~\ref{alg:dd_embedding}.
\begin{algorithm}[H]
\caption{Sequential subdomain optimization with norm-preserving embedding }
\label{alg:dd_embedding}
\begin{algorithmic}[1]

\Require
Number of subdomains $K$, index sets $\{\mathcal{I}_k\}_{k=1}^{K}$,
global energy functional $E_n(\cdot)$,
initial normalized state $\bm{\psi}^0\in\mathbb{C}^N$,
local ansatz maps
$\bm{\theta}^{(k)}\mapsto\bm{\phi}^{(k)}(\bm{\theta}^{(k)})\in\mathbb{C}^{|\mathcal{I}_k|}$.

\Ensure
Sequence of normalized iterates $\{\bm{\psi}^s\}_{s\ge0}$.

\For{$s=0,1,2,\dots,S-1$} \Comment{sweeps}

\State $\bm{\psi}^{s,0}\gets\bm{\psi}^{s}$

\For{$k=1,\dots,K$} \Comment{subdomain updates}

\State Define the local cost
\[
\mathcal{C}_k(\bm{\theta}^{(k)};\bm{\psi}^{s,k-1})
=
E_n\!\left(
\mathcal{E}_k\!\left(
\bm{\psi}^{s,k-1},
\bm{\phi}^{(k)}(\bm{\theta}^{(k)})
\right)
\right)
\]

\State Compute
\[
\bm{\theta}^{(k),\star}
\approx
\arg\min_{\bm{\theta}^{(k)}}
\mathcal{C}_k(\bm{\theta}^{(k)};\bm{\psi}^{s,k-1})
\]

\State Update
\[
\bm{\psi}^{s,k}
\gets
\mathcal{E}_k\!\left(
\bm{\psi}^{s,k-1},
\bm{\phi}^{(k)}(\bm{\theta}^{(k),\star})
\right)
\]

\EndFor

\State $\bm{\psi}^{s+1}\gets\bm{\psi}^{s,K}$

\EndFor

\end{algorithmic}
\end{algorithm}

\subsection{Comparison with Classical Domain Decomposition} \label{subsec:dd_classical_vs_vqa} In classical domain decomposition methods, the wavefunction is explicitly represented in physical space. At sweep $s$, when updating subdomain $k$, the algorithm directly optimizes the physical values $\{\psi_j : j \in \mathcal I_k\}$ while keeping the remaining components fixed. More precisely, using the intermediate notation established previously, the classical subdomain-$k$ update solves 
\begin{equation} 
\bm{\psi}^{s,k} = \underset{\bm{\psi}\in \mathcal{A}_k(\bm{\psi}^{s,k-1})}{\text{\rm argmin}}E_n(\bm{\psi}), 
\end{equation}
over a feasible set defined by the fixed boundary node condition, i.e., 
\[ \mathcal{A}_k(\bm{\psi}^{s,k-1}) := \left\{ \bm{\psi}\in\mathbb{C}^N : \psi_j = \psi^{s,k-1}_j \ \text{for all } j \notin \mathcal I_k^\circ \right\}. \] 
Since the full physical state $\bm{\psi}^{s,k-1}$ is known explicitly, its restriction to the subsequent subdomain, $\bm{\psi}^{s,k-1}\big|_{\mathcal I_{k}}$, is directly accessible. Therefore, when updating subdomain $k$, the optimization starts from the exact physical values of the current iterate. Consequently, the starting state always belongs to the feasible set, guaranteeing that, for any $s$
\[ E_n(\bm{\psi}^{s,k}) \le E_n(\bm{\psi}^{s,k-1}). \] 
Hence, under local minimization, the global energy is non-increasing during classical updates. 

In contrast, within the VQA framework, the subdomain update is not performed directly on the physical values. Instead, each subdomain is parameterized by a quantum circuit, and the update is restricted to the expressible image of the ansatz: 
\[\mathcal{A}_k^{\mathrm{VQA}}(\bm{\psi}^{s,k-1}):=
\left\{
\mathcal{E}_k\!\left(
\bm{\psi}^{s,k-1},
\bm{\phi}^{(k)}(\bm{\theta}^{(k)})
\right)
:
\bm{\theta}^{(k)}\in\mathbb{R}^{p_{\mathrm{loc}}}
\right\}.\]

When the algorithm begins to optimize each subdomain $k$, a fundamental challenge arises: although the reference physical values $\bm{\psi}^{s,k-1}\big|_{\mathcal I_{k}}$ are known, there is no explicit inverse mapping to deduce the parameter vector $\bm{\theta}^{(k)}$ that would exactly reproduce this state. Consequently, the local VQA optimization for subdomain $k$ cannot start from a parameter that exactly represents the current global iterate $\bm{\psi}^{s,k-1}$. Instead, a warm-start strategy is employed, typically reusing the parameters $\bm{\theta}^{(k)}$ from the previous sweep ($s-1$). As a result, at the beginning of the optimization step for each subdomain $k$, the global wavefunction temporarily deviates from the previously improved state, leading to temporary upward spikes in the total energy before the local optimizer reduces it again.
\begin{figure}[h] \centering \includegraphics[width=\textwidth]{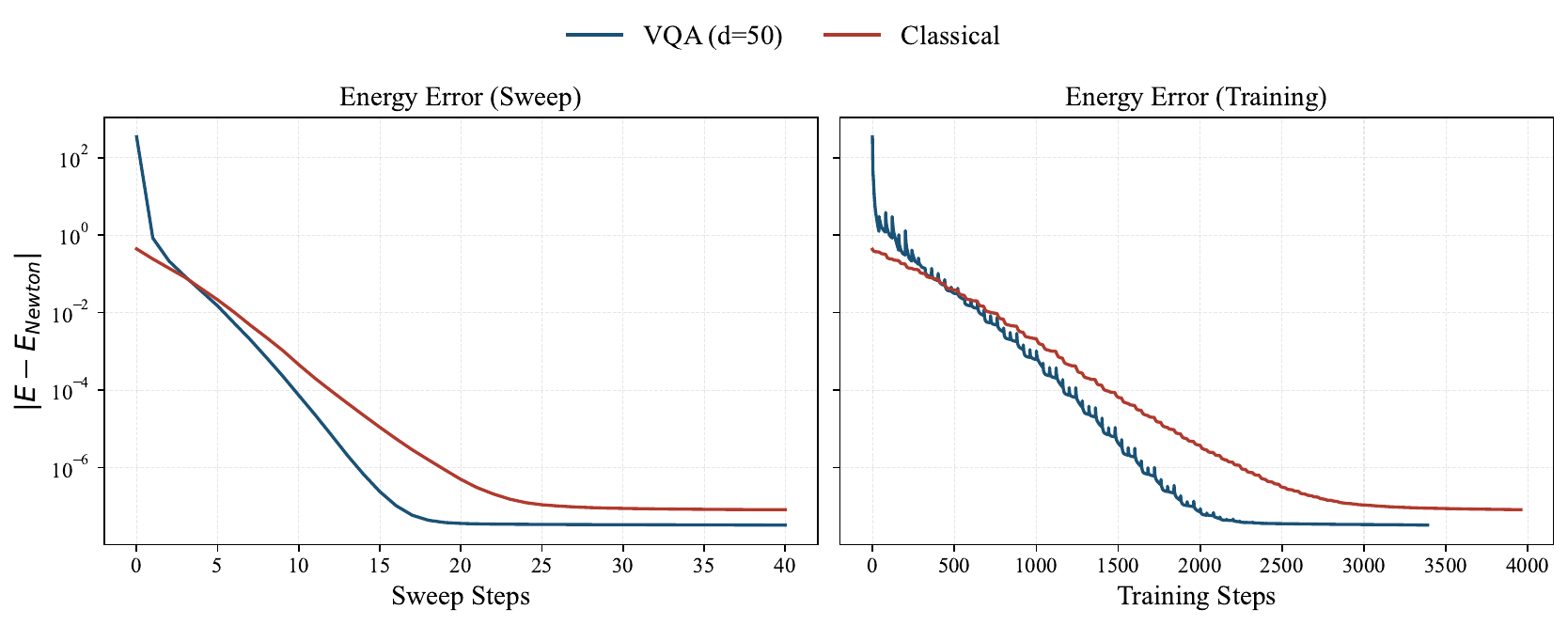} \caption{Comparison of training dynamics between the classical domain decomposition and the VQA domain decomposition ($d=50$) for n=$7$.} \label{fig:classical_vs_vqa} \end{figure} 

Figure~\ref{fig:classical_vs_vqa} illustrates the training dynamics, and, in particular, the temporary increases in the energy error observed in the VQA-based domain decomposition. In the global energy-error curve (measured against total training steps), the VQA-based domain decomposition exhibits these increases in the energy at the beginning of subdomain optimization. However, global energy-error curve, measured at the end of each sweeps resumes a monotonic decrease. This indicates that the observed spikes do not reflect an intrinsic instability of the VQA formulation, but rather arise from the sequential subdomain-update mechanism together with the absence of an exact inverse map from physical states to circuit parameters.

In principle, this effect could be alleviated in future implementations by introducing  more effective parameter-synchronization strategies, thereby reducing the loss of representational consistency across subdomains. By contrast, the classical domain decomposition scheme guarantees a monotonic decrease of the energy at each subdomain update, although it may stagnate at higher energy levels. The VQA-based formulation, on the other hand, exhibits a faster decay of the energy error when measured with respect to training steps.

We stress, however, that the classical method considered here is implemented using the same sequential subdomain-update protocol as the VQA approach in order to ensure a controlled comparison. More advanced classical domain decomposition strategies could display different convergence properties. Accordingly, the present comparison should be understood as a study under identical update dynamics, designed to isolate the effect of the variational ansatz, rather than as a definitive comparison between classical and quantum domain decomposition methods. It illustrates also that the above warm-start strategy makes sense.

%Numerical Results: Mitigation of BP-like Flatness
\section{Numerical Results}
\label{sec:numerical_results}
\subsection{Experimental Setup}
\label{subsec:setup}
We compare the global (full-domain) VQA formulation with the proposed
domain-decomposed VQA.
Unless stated otherwise, all experiments use the potential
\eqref{potential},
and the same hardware-efficient ansatz architecture. We consider $n=7, 8$, and $9$ qubits (thus $N=2^n$ grid points) with circuit depths of $d=100, 200$, and $400$, respectively, in the full-domain formulation. For the domain-decomposed VQA, each subdomain contains one fewer qubit.
Thus, a global problem with $n$ qubits is decomposed into subproblems
with $n-1$ qubits.
Accordingly, the circuit depth used for each subdomain is reduced by a
factor of two (i.e., $d=50, 100$, and $200$ for $n-1=6, 7$, and $8$, respectively). Specifically, in all domain-decomposition experiments, the number of optimization steps per subdomain update is fixed to $50$. To ensure a fair comparison across system sizes, all VQA runs are initialized from the same constant parameter vector,
i.e., $\theta_i = 1$ for all circuit parameters.
As a reference, we compute a high-accuracy ground state solution using a classical Newton method and denote its energy by $E_{\rm Newton}$. We report (i) the absolute $L_2$ error with respect to the Newton reference, and (ii) the energy error $|E - E_{\rm Newton}|$. All local and global optimizations are performed using the BFGS algorithm with a stopping tolerance of $10^{-20}$.

\subsection{Controlled Subdomain Training to Avoid Prolonged Flat Regimes}

The domain-decomposition optimization is implemented as a sequential
subdomain update procedure: during sweep $s$, each subdomain $k$ is updated by
solving a local constrained minimization problem while the exterior
degrees of freedom remain fixed. We first consider the regime of \emph{maximal local training}, where
each local subproblem is solved to numerical convergence. More precisely,
for each sweep $s$ and each subdomain $k$, the local problem
\eqref{eq:subdomain_argmin} is minimized using BFGS until machine precision
is reached (tolerance $10^{-20}$). This process is repeated for a fixed
number of five sweeps.
\begin{figure}[h!]
\centering
\includegraphics[width=\textwidth]{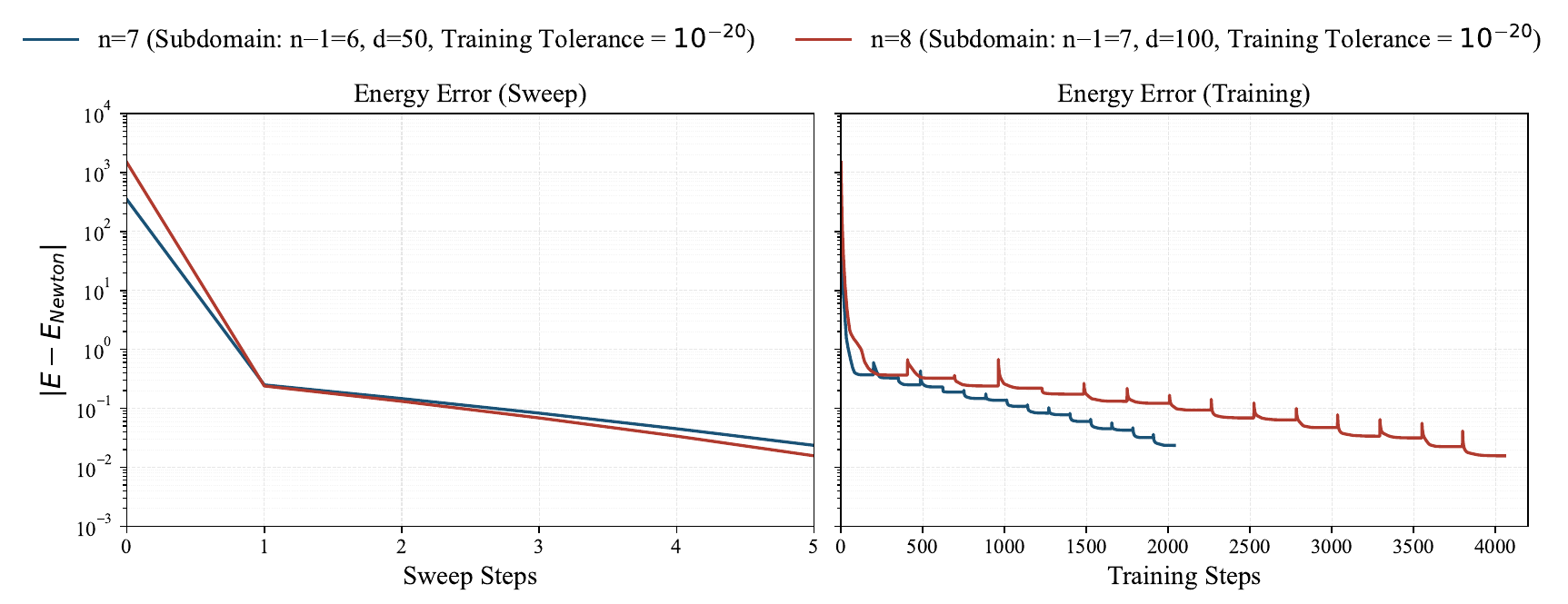}
\caption{
Domain decomposition dynamics under maximal subdomain training
(machine-precision convergence) for $n=7,8$.
Left: energy error per sweep.
Right: energy error versus total training steps.
}
\label{fig:dd_max_training}
\end{figure}
Figure~\ref{fig:dd_max_training} illustrates the training dynamics
for $n=7$ and $n=8$ under this maximal-training regime.
Although domain decomposition mitigates part of the BP
behavior, it does not eliminate it entirely. From the global energy-error curve (measured against total training
steps), we observe that achieving a comparable energy error level requires
approximately twice as many local iterations per subdomain in the $n=8$
case compared to $n=7$, despite both configurations performing the
same number of sweeps. Interestingly, the energy decrease within each subdomain update is nearly identical across both cases. This indicates that solving each local problem to full numerical convergence may prolong the overall optimization by allowing the algorithm to spend excessive iterations in locally flat regions.

We therefore consider a \emph{controlled-training regime}, where the
number of optimization steps for each subdomain is fixed a priori
(e.g., $50$ BFGS iterations per subdomain), regardless of whether local
numerical convergence has been reached. In principle, the number of optimization steps required for each subdomain may increase as the number of qubits grows, since the associated optimization landscape becomes more complex. However, to ensure a fair comparison across different system sizes,
we fix the number of optimization steps per subdomain to $50$ for all
values of $n$ in our experiments.

\begin{figure}[h!]
\centering
\includegraphics[width=\textwidth]{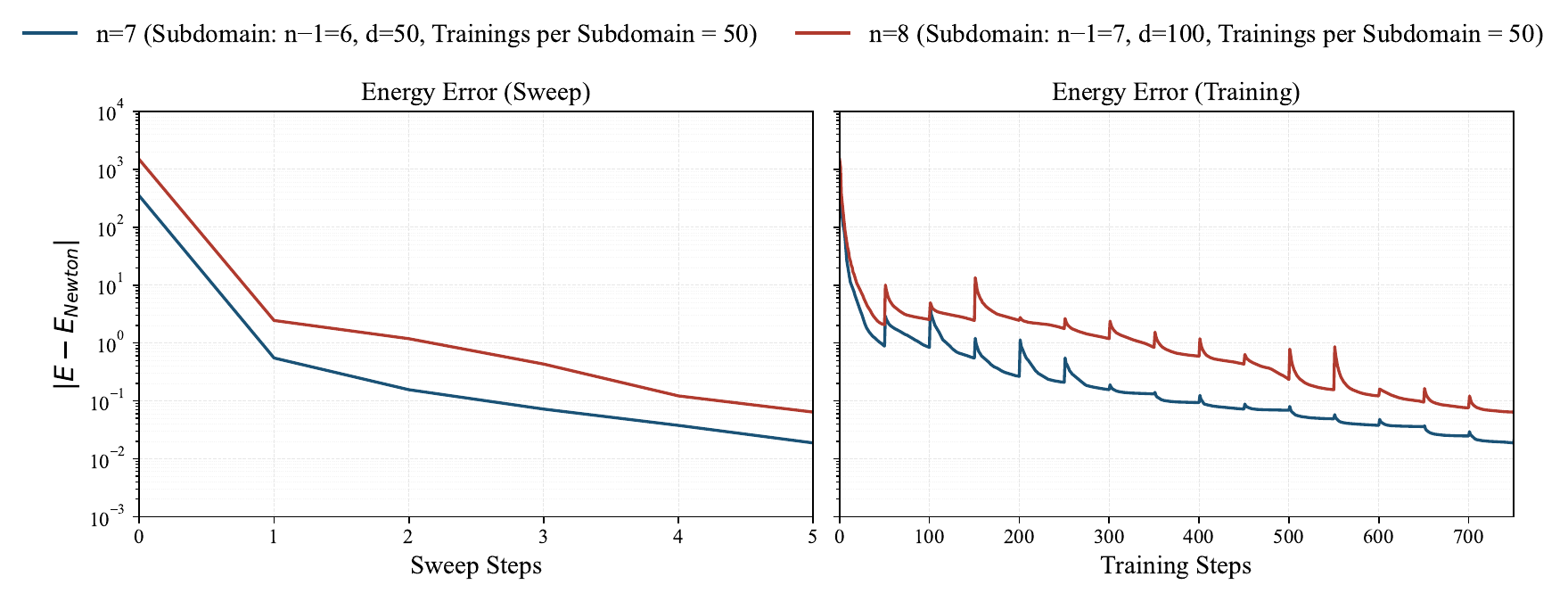}
\caption{
Domain decomposition dynamics with fixed subdomain training
(50 iterations per subdomain update) for $n=7,8$.
Left: energy error per sweep.
Right: energy error versus total training steps.
}
\label{fig:dd_50_training}
\end{figure}

Figure~\ref{fig:dd_50_training} shows the corresponding dynamics.
Under the controlled schedule, the discrepancy between $n=7$ and
$n=8$ is substantially reduced. In contrast to the maximal-training
case, the $n=8$ configuration no longer remains trapped in extended
flat regions. As a result, both cases achieve comparable energy errors
under a similar overall iteration budget. This observation suggests that limiting the number of optimization steps per subdomain
can prevent excessive exploration of locally flat regions and thereby
mitigate prolonged BP-like behavior.

\subsection{Comparison Between Full-Domain and Domain-Decomposed VQA}
\label{subsec:comparison}

Figure~\ref{fig:dd_vs_full} highlights a qualitative difference between the two formulations.
In the full-domain setting, increasing $n$ leads to visibly slower reduction of the energy error and a flatter training trajectory,
consistent with the BP-like energy landscape flatness discussed in Section~\ref{sec:bp}.
In contrast, when domain decomposition is applied, the flattening trend is substantially reduced:
the $L_2$ error and the energy error decrease more uniformly across system sizes, indicating improved scalability of the optimization.
\begin{figure}[h]
\centering
\includegraphics[width=\textwidth]{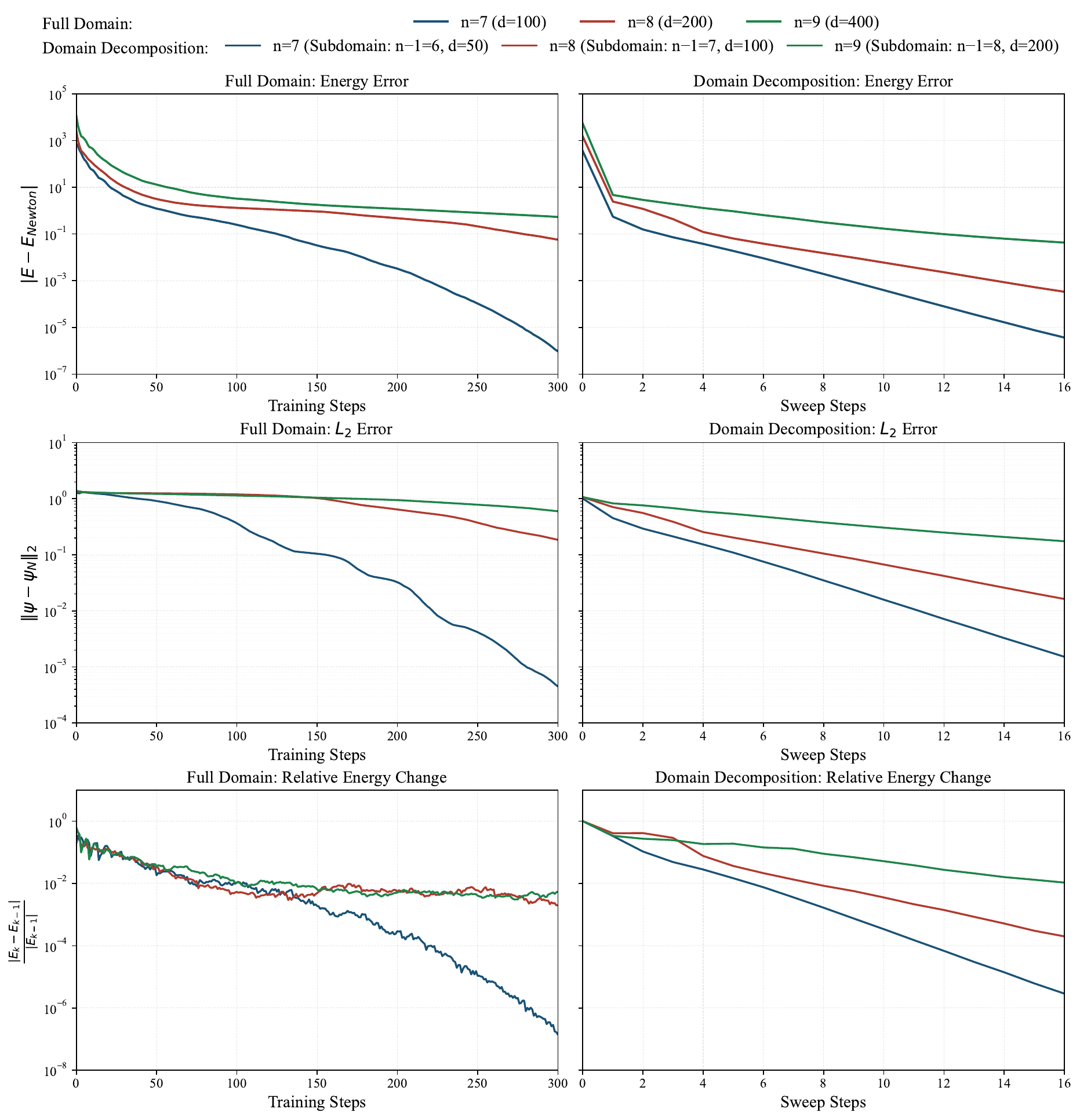}
\caption{
Comparison between the full-domain VQA and the domain-decomposed VQA for n=$7,8,9$. Left column: full-domain; right column: domain decomposition. Top row: energy error.
Middle row: $L_2$ error. Bottom row: relative energy change.
}
\label{fig:dd_vs_full}
\end{figure}
A direct quantitative comparison between the two formulations is not straightforward, since the computational cost of a single BFGS iteration depends on several implementation factors, including the evaluation of the kinetic term and the behavior of the line-search procedure. To obtain a rough estimate of the relative computational cost per BFGS iteration, we consider several simple proxies.

First, the parameter dimension differs significantly.
In the full-domain formulation, the ansatz contains $2n(d+1)$ parameters.
In contrast, in the domain-decomposition formulation each subdomain ansatz contains $2(n-1)(d/2+1)$ parameters.
For the configurations considered in Section~\ref{subsec:setup},
the full-domain ansatz therefore involves roughly $2.2$--$2.3$ times more parameters than each subdomain ansatzs.

Second, each BFGS iteration requires evaluating the gradient of the energy with respect to all parameters.
The full-domain formulation evaluates the energy on $2^n$ grid nodes, whereas each subdomain involves only $2^{n-1}$ nodes.
This suggests an additional factor of roughly two in the cost of evaluating each gradient component.

Finally, the BFGS optimization involves line-search procedures and quasi-Newton updates whose computational overhead increases with the dimension of the parameter space.
Although the precise impact of these effects depends on problem-specific features of the optimization landscape and cannot be predicted reliably a priori, the quasi-Newton update itself operates on matrices whose dimension is proportional to the number of optimization parameters.
Since the full-domain ansatz involves roughly twice as many parameters as each subdomain ansatz, the associated update operations can therefore be expected to introduce an additional factor of approximately two in the optimization overhead.
Combining these effects leads to a rough estimate that a complete BFGS iteration in the full-domain formulation may be on the order of $8$--$10$ times more expensive than in the subdomain case.

Based on this heuristic computational cost estimate, we compare the full-domain and domain-decomposition formulations in terms of the $L_2$ error and the energy error $|E-E_{\rm Newton}|$ under a comparable computational budget.
Using the estimate that a full-domain iteration is roughly eight times more expensive than a subdomain update, we establish an approximately equivalent iteration budget between the two formulations.
Under this assumption, $300$ BFGS iterations in the full-domain formulation correspond to approximately $16$ sweeps of the domain decomposition method when each subdomain is updated with $50$ training steps.

\begin{table}[h!]
\centering
\caption{Comparison of the absolute $L_2$ error and energy error under an approximately equivalent computational cost ($300$ full-domain trainings vs $16$ domain decomposition sweeps)}
\label{tab:quantitative_budget}
\begin{tabular}{lcccccc}
\toprule
\textbf{Method} 
& \multicolumn{2}{c}{$n=7$} 
& \multicolumn{2}{c}{$n=8$} 
& \multicolumn{2}{c}{$n=9$} \\
\cmidrule(lr){2-3} \cmidrule(lr){4-5} \cmidrule(lr){6-7}
& $|E-E_{\rm Newton}|$ & $L_2$ Error & $|E-E_{\rm Newton}|$ & $L_2$ Error & $|E-E_{\rm Newton}|$ & $L_2$ Error \\
\midrule
Full domain 
& $9.5\times 10^{-7}$ & $4.5\times 10^{-4}$ 
& $5.7\times 10^{-2}$ & $1.8\times 10^{-1}$ 
& $6.0\times 10^{-1}$ & $5.3\times 10^{-1}$ \\

Domain decomposition 
& $3.7\times 10^{-6}$ & $1.5\times 10^{-3}$ 
& $3.4\times 10^{-4}$ & $1.6\times 10^{-2}$ 
& $4.3\times 10^{-2}$ & $1.7\times 10^{-1}$ \\
\bottomrule
\end{tabular}
\end{table}

Table~\ref{tab:quantitative_budget} indicates that the domain decomposition formulation mitigates the BP phenomenon compared to the full-domain formulation.
For $n=7$, the full-domain formulation achieves smaller $L_2$ and energy errors than the domain decomposition method.
In this relatively small regime, the BP effect is not yet dominant, allowing the full-domain formulation to benefit from the deeper circuit ansatz ($d=100$ vs.\ $d=50$).
However, as the number of qubits increases, the convergence of the full-domain formulation rapidly deteriorates.
The domain decomposition method also exhibits some degradation in convergence as the number of qubits increases; however, this deterioration is significantly less pronounced than in the full-domain formulation.
Overall, these results suggest that domain decomposition provides a practical strategy for stabilizing VQA optimization as the problem size grows.

\subsection{Effect of Circuit Depth on the Optimization Dynamics}
\label{subsec:depth_effect}
In the previous experiments, the circuit depth in each subdomain was chosen to be half of that used in the corresponding full-domain formulation. Here, we further investigate the effect of the circuit depth on the optimization dynamics within the domain-decomposition formulation. Specifically, we compare two settings: one where each subdomain uses the same circuit depth as the full-domain ansatz, and another where the depth is reduced by a factor of two. This corresponds to comparing depths $d=50$ and $100$ for $n=7$ (subdomain: $n-1$=$6$), $d=100$ and $200$ for $n=8$ (subdomain $n-1$=$7$), and $d=200$ and $400$ for $n=9$ (subdomain $n-1$=$8$). In all experiments, the number of optimization steps per subdomain update is fixed to $50$.

\begin{figure}[h]
\centering
\includegraphics[width=\textwidth]{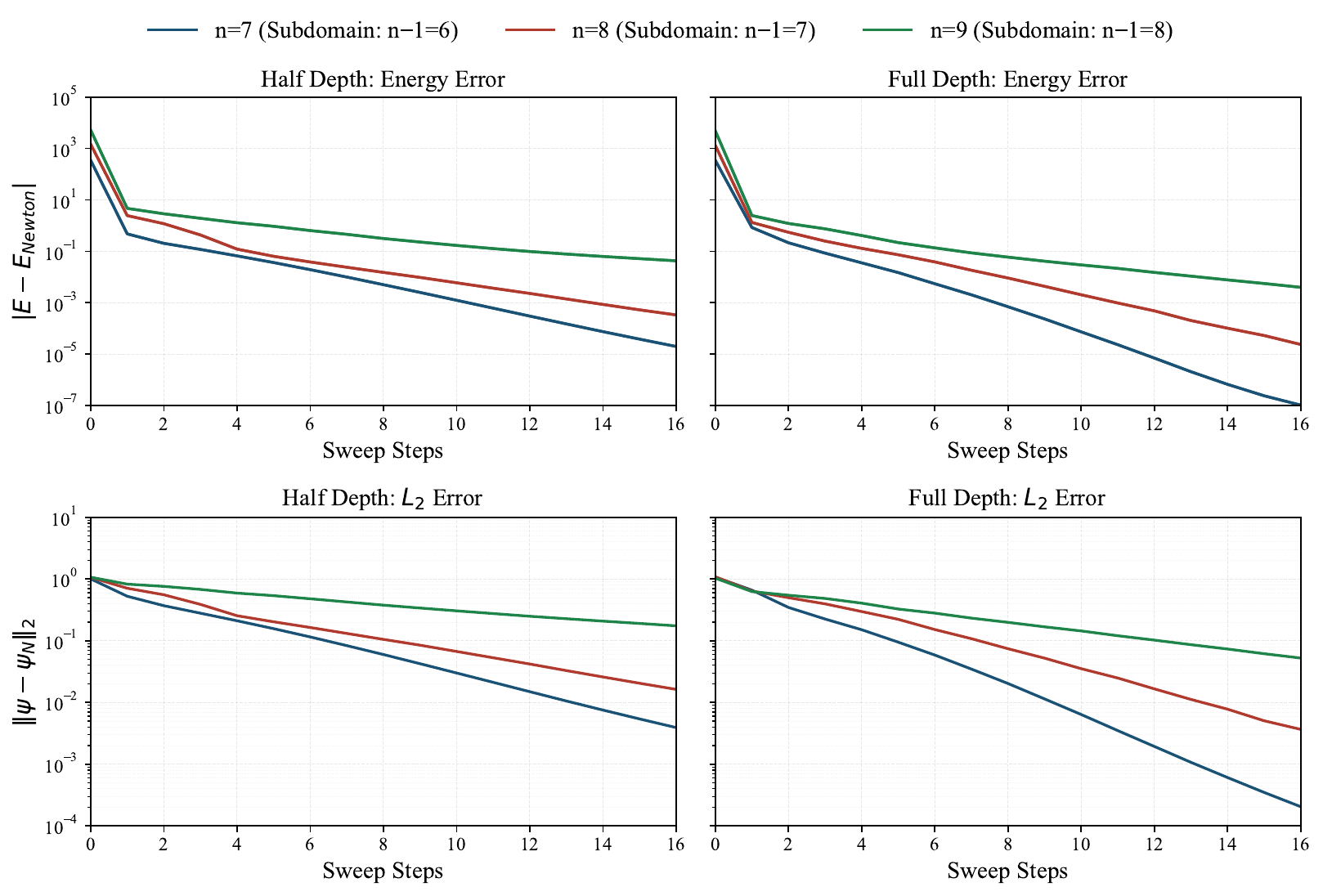}
\caption{
Comparison between half-depth and full-depth in domain decomposition formulation for n=$7,8,9$. Left column: half-Depth; right column: full-Depth. Top row: energy error. Bottom row: $L_2$ error.
}
\label{fig:DD_comparison}
\end{figure}

\begin{table}[h!]
\centering
\caption{Effect of circuit depth on the optimization performance in the
domain-decomposition formulation after 16 sweeps.}
\label{tab:circuit_depth}
\begin{tabular}{lcccccc}
\toprule
\textbf{Circuit Depth} 
& \multicolumn{2}{c}{$n=7$} 
& \multicolumn{2}{c}{$n=8$} 
& \multicolumn{2}{c}{$n=9$} \\
\cmidrule(lr){2-3} \cmidrule(lr){4-5} \cmidrule(lr){6-7}
& $|E-E_{\rm Newton}|$ & $L_2$ Error & $|E-E_{\rm Newton}|$ & $L_2$ Error & $|E-E_{\rm Newton}|$ & $L_2$ Error \\
\midrule
Half depth 
& $3.7\times 10^{-6}$ & $1.5\times 10^{-3}$ 
& $3.4\times 10^{-4}$ & $1.6\times 10^{-2}$ 
& $4.3\times 10^{-2}$ & $1.7\times 10^{-1}$ \\

Full depth
& $1.0\times 10^{-7}$ & $2.1\times 10^{-4}$ 
& $2.4\times 10^{-5}$ & $3.7\times 10^{-3}$ 
& $4.0\times 10^{-3}$ & $5.2\times 10^{-2}$ \\
\bottomrule
\end{tabular}
\end{table}

Figure~\ref{fig:DD_comparison} 
%and %Table~\ref{tab:depth_and_sweeps_convergence}
first show the clear trade-off between computational cost and ansatz expressivity. When the circuit depth is reduced by half, the optimization converges to solutions with larger energy and $L_2$ errors for all tested $n$. These results suggest that, although reducing the circuit depth decreases the computational cost per subdomain update, a sufficiently expressive ansatz remains important for achieving accurate solutions in the domain-decomposition framework.

\section{Conclusion and Perspective}

In this work, we introduced a domain-decomposition framework for variational quantum
approximations of the time-independent Gross-Pitaevskii eigenvalue problem. The proposed
strategy partitions the global domain $\Omega$ into overlapping subdomains,
each represented using a reduced number of qubits. A norm-preserving
embedding operator was constructed to ensure exact conservation of the
discrete $L_2$ norm at every intermediate subdomain update, so that all
iterates generated by the subdomain optimization procedure remain
feasible with respect to the global normalization constraint. The reduction of each local subproblem from $n$ qubits to $(n-1)$ qubits
decreases the effective Hilbert space dimension and modifies the
optimization landscape. Numerical experiments that show that the global
formulation exhibits clear barren plateau-like degradation as the system size
increases, manifested by flattened energy error trajectories and reduced
progress under fixed training budgets. In contrast, the proposed
domain-decomposition strategy alleviates this effect and improves scaling in both
$L_2$ error and energy error. Within the domain decomposition framework, we further examined the role of the local training schedule within the
sequential block updates. Solving each local subproblem to full numerical
convergence may lead to long stagnation in the energy profile and thus, unnecessary expenditure of computational resources.  
On the other hand, limiting the number of optimization steps per subdomain update
improves the overall energy decay while maintaining a reasonable computational cost. Future work can, e.g., focus on balancing the number of local optimization steps on each subdomain and the number of global domain decomposition iterations.

\section*{Acknowledgements} 
This project has received funding from the European Research Council (ERC) under the European Union’s Horizon 2020 research and innovation program (Grant Agreement No. 810367). M. H. acknowledges support from the Deutsche Forschungsgemeinschaft (DFG) through the Emmy Noether Programme (Project No. 555300205). L. S. B. acknowledges support from Imam Abdulrahman Bin Faisal University and 
King Abdullah University of Science and Technology (KAUST). Y.M. also acknowledge the support of the project HQI initia-
tive (www.hqi.fr) supported by France 2030 under
the French National Research Agency award number “ANR-
22-PNCQ-0002"/

The authors would like to thank the organizers of CEMRACS 2025 for making this collaboration possible and for providing the opportunity to meet and work together.

\bibliographystyle{plain}
\bibliography{refs}

\vfill\eject

\appendix

\section*{\textbf{SUPPLEMENTAL INFORMATION}}
\label{appendix}
\section{Lie-algebraic perspective on barren plateaus and domain decomposition}

In this appendix, we interpret the training behavior observed in the main text from the viewpoint of dynamical Lie algebras. The purpose is twofold: first, to show that the full-domain ansatz generates the whole Lie algebra $\mathfrak{su}(2^n)$ and is therefore maximally expressive; second, to explain why the domain-decomposition strategy effectively restricts the variational search to lower-dimensional subproblems, thereby improving trainability.

\subsection{Dynamical Lie algebras and variance scaling}

Let $U(\bm\theta)$ be a parametrized quantum circuit acting on an $n$-qubit Hilbert space $\mathcal H_n=(\mathbb C^2)^{\otimes n}$, and let
\[
\mathfrak g_n \subseteq \mathfrak{su}(2^n)
\]
denote the dynamical Lie subalgebra (DLA) generated by this circuit. For a fixed input state $|\psi\rangle \in (\mathbb C^2)^{\otimes n}$ and an observable $O$, the associated cost function is defined by
\begin{equation}
\mathcal C(\bm\theta)
=
\langle \psi |\, U(\bm\theta)^\dagger O\, U(\bm\theta)\, |\psi\rangle .
\end{equation}

Recent Lie-algebraic analyses of BP \cite{Larocca_2022,Ragone2024} show that, under suitable assumptions, the variance of $\mathcal C(\bm\theta)$ satisfies
\begin{equation}
\label{eq:appendix_variance}
\mathrm{Var}[\mathcal C(\bm\theta)]
=
\frac{\mathcal P_{\mathfrak g_n}(\psi)\,\mathcal P_{\mathfrak g_n}(O)}{\dim(\mathfrak g_n)},
\end{equation}
where $\mathcal P_{\mathfrak g_n}(\psi)$ and $\mathcal P_{\mathfrak g_n}(O)$ quantify the overlap of the input state and the observable with the dynamical Lie subalgebra. This expression shows that, as the dimension of the Lie algebra increases, the cost function becomes increasingly concentrated. In particular, if $\dim(\mathfrak g_n)$ scales exponentially with the number of qubits, then the fluctuations of the cost function are expected to become exponentially small, which is one of the standard signatures of BP.

The quantities $\mathcal P_{\mathfrak g_n}(\psi)$ and $\mathcal P_{\mathfrak g_n}(O)$ may affect the prefactor in \eqref{eq:appendix_variance}, but they are not expected, in general, to compensate for the decay induced by an exponential growth of $\dim(\mathfrak g_n)$.

Strictly speaking, \eqref{eq:appendix_variance} applies in the deep-circuit regime in which the ansatz forms a unitary 2-design over the Lie group generated by the DLA. Since this property is not established here for the finite circuit depths used in the simulations, \eqref{eq:appendix_variance} should be understood as an asymptotic heuristic providing a qualitative Lie-algebraic interpretation of the observed training behavior.

The central question is therefore to determine the dimension of the DLA generated by the variational ansatz.
\subsection{Definition of the full-domain DLA}

Let
 $\X_j,\Y_j,\Z_j$ denote the Pauli matrices acting on qubit $j\in\{1,\dots,n\}$. We denote by
\[
\mathcal P_n
=
\left\{
\sigma_1\otimes\cdots\otimes\sigma_n
\;:\;
\sigma_j\in\{\I,\X,\Y,\Z\}
\right\}
\]
the set of $n$-qubit Pauli strings, and by
\[
\mathcal P_n^\star = \mathcal P_n \setminus \{I^{\otimes n}\}
\]
the set of nontrivial Pauli strings.

The Lie algebra of the special unitary group $SU(2^n)$ is
\begin{equation}
\label{eq:appendix_su}
\mathfrak{su}(2^n)
=
\left\{
A\in\mathbb C^{2^n\times 2^n}
\;:\;
A^\dagger=-A,
\quad
\mathrm{Tr}(A)=0
\right\},
\end{equation}
and satisfies
\[
\dim\bigl(\mathfrak{su}(2^n)\bigr)=4^n-1.
\]

The ansatz considered in the main text consists of repeated layers of local rotations $R_x$ and $R_z$ on every qubit, followed by a fixed entangling block $U_{\mathrm{ent}}$ composed of nearest-neighbour CNOT gates on a connected graph. The corresponding infinitesimal generators are therefore
\begin{equation}
\label{eq:appendix_local_generators}
\{\, i\X_j,\ i\Z_j : j=1,\dots,n \,\}.
\end{equation}
Since the same entangling block is repeated from layer to layer, the DLA generated by the ansatz is
\begin{equation}
\label{eq:appendix_dla}
\mathfrak g_n
=
\Big\langle
i\X_j,\ i\Z_j,\ iU_{\mathrm{ent}}\X_jU_{\mathrm{ent}}^\dagger,\ iU_{\mathrm{ent}}\Z_jU_{\mathrm{ent}}^\dagger
\;:\;
j=1,\dots,n
\Big\rangle_{\mathrm{Lie}}.
\end{equation}

We now show that $\mathfrak g_n$ coincides with the full Lie algebra $\mathfrak{su}(2^n)$.

\subsection{Auxiliary commutator identities}

We shall use the standard Pauli commutation relations
\begin{equation}
\label{eq:appendix_pauli_comm}
[\X,\Y]=2i\Z,
\qquad
[\Y,\Z]=2i\X,
\qquad
[\Z,\X]=2i\Y,
\end{equation}
as well as the fact that conjugation by a CNOT gate maps Pauli operators to Pauli operators. More precisely, if $\mathrm{CX}_{a\to b}$ denotes a CNOT with control qubit $a$ and target qubit $b$, then
\begin{equation}
\label{eq:appendix_cnot_conj}
\mathrm{CX}_{a\to b}\,\X_a\,\mathrm{CX}_{a\to b}^\dagger = \X_a\X_b,
\qquad
\mathrm{CX}_{a\to b}\,\Z_b\,\mathrm{CX}_{a\to b}^\dagger = \Z_a\Z_b.
\end{equation}
Thus, the entangling block transforms local generators into nonlocal Pauli operators supported on neighbouring qubits.

\subsection{Full controllability of the global ansatz}

\begin{theorem}
\label{thm:full_dla}
Let $\mathfrak g_n$ be the dynamical Lie algebra generated by the full-domain ansatz defined in \eqref{eq:appendix_dla}. Assume that the entangling block $U_{\mathrm{ent}}$ consists of nearest-neighbour CNOT gates on a connected coupling graph. Then
\[
\mathfrak g_n = \mathfrak{su}(2^n).
\]
In particular,
\[
\dim(\mathfrak g_n)=4^n-1.
\]
\end{theorem}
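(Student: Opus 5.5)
The plan is to prove the two inclusions separately. The inclusion $\mathfrak g_n\subseteq\mathfrak{su}(2^n)$ is immediate. Every generator in \eqref{eq:appendix_dla} is $i$ times a Hermitian, traceless Pauli string, and conjugation by the unitary $U_{\mathrm{ent}}$ preserves both properties. The commutator of two skew-Hermitian traceless matrices is again skew-Hermitian and traceless, so the Lie closure stays inside $\mathfrak{su}(2^n)$.

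For the reverse inclusion, the goal is to show that $\mathfrak g_n$ contains $i\sigma$ for every $\sigma\in\mathcal P_n^\star$. These $4^n-1$ elements form a basis of $\mathfrak{su}(2^n)$, so this gives both the equality and the dimension count.

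\textbf{Step 1 (single-qubit algebra).} By \eqref{eq:appendix_pauli_comm}, $[i\Z_j,i\X_j]=-2i\Y_j$, so $i\Y_j\in\mathfrak g_n$. Hence $\mathfrak g_n\supseteq\bigoplus_j\mathfrak{su}(2)_j$.

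\textbf{Step 2 (one two-qubit generator per edge).} Here we need the conjugated generators. Since $U_{\mathrm{ent}}$ is a product of CNOTs, it is a Clifford unitary, so each $U_{\mathrm{ent}}\X_jU_{\mathrm{ent}}^\dagger$ is a Pauli string, in general nonlocal because of the ring ordering. The key claim is that, for each edge $(a,b)$ of the coupling graph, the Lie algebra generated by these strings together with all single-qubit Paulis contains some two-qubit Pauli $i\sigma_a\otimes\tau_b$ with $\sigma,\tau\neq\I$.

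For a single CNOT this follows directly from \eqref{eq:appendix_cnot_conj}, which gives $\X_a\X_b$. For the full ring, I would take a conjugated string $P$ and strip off its unwanted tensor factors by commuting with single-qubit Paulis. Commuting $P$ with a single-qubit Pauli $Q_k$ that anticommutes with the $k$-th factor of $P$ rotates that factor while leaving the support unchanged, so this alone cannot delete factors. Deletion is done by commuting two Pauli strings that overlap and anticommute on exactly one site: their commutator equals the product of the two strings up to a phase, and on sites where the two strings carry the same factor, that factor cancels. I would organize this cancellation along the CNOT chain, using the explicit Heisenberg-picture images of $\X_j$ and $\Z_j$ under the cyclic CNOT sequence. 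This computation is the main obstacle: one has to check for the specific ring that the supports can actually be reduced to a nearest-neighbour pair, or at least to a set of strings whose supports connect all qubits.

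\textbf{Step 3 (generating everything).} I would then invoke the standard fact that $\bigoplus_j\mathfrak{su}(2)_j$ together with one entangling two-qubit Pauli $i\sigma_a\tau_b$ on each edge of a connected graph generates $\mathfrak{su}(2^n)$. The argument runs as follows.
\begin{itemize}
\item Conjugating by single-qubit rotations, which are available as exponentials of elements already in $\mathfrak g_n$, yields every $i\,\alpha_a\beta_b$ with $\alpha,\beta\in\{\X,\Y,\Z\}$, so all of $\mathfrak{su}(4)$ on edge $(a,b)$ lies in $\mathfrak g_n$.
\item Commutators such as $[i\X_a\Z_b,\,i\Z_b\X_c]=2i\X_a\Y_b\X_c$ then extend the support along paths. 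Induction on support size over the connected graph produces every Pauli string in $\mathcal P_n^\star$.
\end{itemize}

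An alternative to Step 2 that I would keep in reserve is to argue directly that the set $\{U_{\mathrm{ent}}\X_jU_{\mathrm{ent}}^\dagger,\,U_{\mathrm{ent}}\Z_jU_{\mathrm{ent}}^\dagger\}$ contains some string of weight at least 2. Then I would use the known result that local $\mathfrak{su}(2)$ plus any single entangling Pauli generator on a set of qubits whose supports form a connected hypergraph is universal.
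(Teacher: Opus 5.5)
Your outline is the paper's: single-qubit $\mathfrak{su}(2)$'s, two-qubit Paulis on edges, then induction on supports over a connected graph. You also record the easy inclusion $\mathfrak g_n\subseteq\mathfrak{su}(2^n)$, which the paper leaves implicit.

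\textbf{The gap is Step~2}, which you flag as the main obstacle but do not carry out. The paper settles it in one line via \eqref{eq:appendix_cnot_conj}. You are right that this identity describes a single CNOT, not conjugation by the whole ring. A ring-specific computation is genuinely needed, because the hypothesis ``CNOTs on a connected graph'' alone does not suffice: three CNOTs on one edge compose to a SWAP, which maps single-qubit Paulis to single-qubit Paulis, leaving $\mathfrak g_n=\bigoplus_j\mathfrak{su}(2)_j$.

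Your ``stripping'' plan is left unexecuted, and your reserve argument does not close the gap, because the result it invokes is false. If two Pauli strings with supports $S,T$ anticommute, their product has weight $|S|+|T|-2|S\cap T|+|D|$, where $D$ is the set of overlap sites with differing factors and $|D|$ is odd. Hence odd-weight strings are closed under commutators. For example, local $\mathfrak{su}(2)^{\oplus 3}$ together with $i\X_1\X_2\X_3$ generates only the $36$-dimensional span of odd-weight strings, a proper subalgebra of the $63$-dimensional $\mathfrak{su}(8)$. So one string of weight $\ge 2$ is not enough.

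\textbf{The missing computation is short} and needs no stripping. Take the ring of Figure~\ref{fig:ansatz_n5}, $U_{\mathrm{ent}}=\mathrm{CX}_{n\to 1}\mathrm{CX}_{n-1\to n}\cdots\mathrm{CX}_{1\to 2}$. Push $\Z_j$ through the gates in order, using $\Z_b\mapsto\Z_a\Z_b$ for a target $b$ and $\Z_a\mapsto\Z_a$ for a control $a$.
\begin{itemize}
\item For $3\le j\le n$, $\Z_j$ acquires $\Z_{j-1}$ at $\mathrm{CX}_{j-1\to j}$. It is untouched afterwards, since neither $j-1$ nor $j$ is a later target.
\item $\Z_1$ is changed only by the last gate $\mathrm{CX}_{n\to 1}$.
\end{itemize}
Hence, for $n\ge 3$,
\[
U_{\mathrm{ent}}\Z_jU_{\mathrm{ent}}^\dagger=\Z_{j-1}\Z_j\quad(3\le j\le n),\qquad U_{\mathrm{ent}}\Z_1U_{\mathrm{ent}}^\dagger=\Z_n\Z_1 .
\]
These are nearest-neighbour two-qubit generators on the edges of the path $2,3,\dots,n,1$, which passes through all qubits. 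Your Step~3 then finishes the proof. With the opposite convention $U_{\mathrm{ent}}^\dagger(\cdot)U_{\mathrm{ent}}$ one gets $\X_j\X_{j+1}$, $1\le j\le n-1$, instead.

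One slip in Step~3: $\X_a\Z_b$ and $\Z_b\X_c$ carry the same factor on their only common site, so they commute. Use instead, e.g., $[i\X_a\Z_b,\,i\X_b\X_c]=-2i\X_a\Y_b\X_c$.
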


\begin{proof}
We prove that $\mathfrak g_n$ contains all traceless Pauli strings.

\smallskip
\noindent
\emph{Step 1: local controllability on each qubit.}
By definition, $i\X_j,i\Z_j\in\mathfrak g_n$ for all $j$. From \eqref{eq:appendix_pauli_comm},
\[
[i\Z_j,i\X_j]\propto i\Y_j,
\]
hence
\[
i\X_j,\ i\Y_j,\ i\Z_j \in \mathfrak g_n
\qquad
\text{for all } j=1,\dots,n.
\]

\smallskip
\noindent
\emph{Step 2: generation of two-qubit Pauli operators on each edge.}
Since $U_{\mathrm{ent}}$ contains CNOT gates along every edge of a connected graph, \eqref{eq:appendix_cnot_conj} implies that $\mathfrak g_n$ contains nonlocal two-qubit terms supported on neighbouring qubits. For any edge $(j,k)$ of the coupling graph, one obtains, up to relabelling,
\[
i\X_j\X_k \in \mathfrak g_n
\qquad \text{or} \qquad
i\Z_j\Z_k \in \mathfrak g_n.
\]
Assume that $i\X_j\X_k\in\mathfrak g_n$ for some neighbouring pair $(j,k)$. Since $i\Z_j,i\Z_k\in\mathfrak g_n$ and $\mathfrak g_n$ is closed under commutators, it follows that
\[
[i\Z_j,i\X_j\X_k]\propto i\Y_j\X_k,
\qquad
[i\Z_k,i\X_j\X_k]\propto i\X_j\Y_k.
\]
Hence $i\Y_j\X_k,i\X_j\Y_k\in\mathfrak g_n$, and further commutators generate all two-qubit Pauli strings supported on the edge $(j,k)$.
 Therefore,
\[
i\sigma_j^\alpha \sigma_k^\beta \in \mathfrak g_n,
\qquad
\forall (\alpha,\beta)\in\{\X,\Y,\Z\}^2,
\]
for every neighbouring pair $(j,k)$.

\smallskip
\noindent
\emph{Step 3: propagation to connected supports.}
We now prove by induction on the size of the support that every Pauli string supported on a connected subset of qubits belongs to $\mathfrak g_n$.

The result is true for support size one by Step 1 and for support size two by Step 2. Assume it holds for all connected supports of size $r$. Let $S$ be a connected subset with $|S|=r$, and let $k\notin S$ be a neighbouring qubit such that $S\cup\{k\}$ remains connected. By the induction hypothesis, $\mathfrak g_n$ contains every Pauli string supported on $S$, and by Step 2 it also contains all two-qubit Pauli operators on the edge linking $k$ to $S$. Choosing a two-qubit Pauli operator that does not commute with the factor acting on the boundary qubit of $S$, one obtains by commutation a new Pauli string supported on $S\cup\{k\}$. Hence the claim follows by induction.

\smallskip
\noindent
\emph{Step 4: generation of arbitrary Pauli strings.}
Since the coupling graph is connected, every nontrivial Pauli string can be built by successively extending its support along a spanning tree and adjusting local Pauli labels by means of Step 1. Therefore,
\[
iP\in\mathfrak g_n
\qquad
\forall P\in\mathcal P_n^\ast.
\]
As the set $\{\,iP : P\in\mathcal P_n^\ast\,\}$ forms a basis of $\mathfrak{su}(2^n)$, we conclude that
\[
\mathfrak g_n=\mathfrak{su}(2^n).
\]
The dimension formula follows immediately:
\[
\dim(\mathfrak g_n)=4^n-1.
\]
\end{proof}

Theorem \ref{thm:full_dla} shows that the global ansatz is fully controllable \cite{Larocca_2022}. In particular, its DLA has exponential dimension, which places the full-domain optimization in the Lie-algebraic regime where BP phenomena are expected.

\subsection{Local Lie algebras in the domain-decomposition setting}

We now consider the domain-decomposition strategy. At each local optimization step, the circuit does not act on the full $n$-qubit register, but only on a subdomain represented on
\[
n_{\mathrm{sub}} = n-1
\]
qubits. The same hardware-efficient ansatz is then applied on this reduced register.

Let $\mathfrak g_{n-1}$ denote the DLA generated on one such subdomain. Since the local circuit has the same structure as the global ansatz, Theorem \ref{thm:full_dla} immediately yields the following statement.

\begin{proposition}
\label{prop:subdomain_dla}
The dynamical Lie algebra associated with one domain-decomposition subproblem is
\[
\mathfrak g_{n-1}=\mathfrak{su}(2^{n-1}).
\]
In particular,
\[
\dim(\mathfrak g_{n-1})=4^{n-1}-1.
\]
\end{proposition}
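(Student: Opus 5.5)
The proposition is a direct specialization of Theorem~\ref{thm:full_dla}, so the plan is to reduce to it. The theorem is stated for an arbitrary number of qubits. Its only structural hypotheses are the following: the generators are $i\X_j, i\Z_j$ on every qubit together with their conjugates by a repeated entangling block $U_{\mathrm{ent}}$, and $U_{\mathrm{ent}}$ consists of nearest-neighbour CNOTs on a connected coupling graph. I will therefore verify these hypotheses for the local circuit acting on the $(n-1)$-qubit register of one subdomain, and then apply the theorem with $n$ replaced by $n-1$.

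\emph{Step 1: identify the local generators.} Each subdomain ansatz $\bm\phi^{(k)}(\bm\theta^{(k)})$ is the hardware-efficient circuit of Figure~\ref{fig:ansatz_n5} on $n-1$ qubits. It consists of $R_x$ and $R_z$ rotations on every qubit $q=0,\dots,n-2$, interleaved with a fixed ring of CNOTs. Its DLA $\mathfrak g_{n-1}$ is therefore given by \eqref{eq:appendix_dla} with $n$ replaced by $n-1$ and $U_{\mathrm{ent}}$ replaced by the local ring block $U_{\mathrm{ent}}^{(n-1)}$.

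\emph{Step 2: check connectivity.} The ring of CNOTs on $n-1$ qubits, including the closing CNOT between the last and first qubits, has a cycle as its coupling graph. A cycle is connected whenever $n-1\ge 2$. (In the degenerate case $n-1=2$ the ring reduces to CNOTs on a single edge, which is still connected.)

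\emph{Step 3: apply the theorem.} Theorem~\ref{thm:full_dla} then gives $\mathfrak g_{n-1}=\mathfrak{su}(2^{n-1})$. The dimension $4^{n-1}-1$ follows from the same dimension count, namely the number of nontrivial Pauli strings on $n-1$ qubits.

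\emph{Main obstacle.} The only delicate point is to confirm that the domain-decomposition construction does not alter the relevant algebra. The embedding $\mathcal E_k$, with its scaling factor $\alpha$ and its fixed boundary values, acts classically on the output state and introduces no additional quantum generators. The DLA is a property of the parameterized unitary alone, so it is unaffected. I would state this explicitly. I would also note that the proposition concerns a single subproblem and makes no claim about the joint structure across the three subdomains.
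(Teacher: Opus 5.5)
Your proposal is correct and follows the paper's route: the paper's proof simply says that the argument of Theorem~\ref{thm:full_dla} carries over verbatim with $n$ replaced by $n-1$. Your explicit checks, that the $(n-1)$-qubit CNOT ring still has a connected coupling graph and that the classical embedding $\mathcal E_k$ plays no role in the DLA, just spell out why that specialization is legitimate.
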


\begin{proof}
The proof is identical to that of Theorem \ref{thm:full_dla}, with $n$ replaced by $n-1$.
\end{proof}

Therefore, each local optimization step in the domain-decomposition scheme explores a Lie algebra whose dimension is four times smaller, up to lower-order corrections, than that of the full problem:
\[
\frac{4^n-1}{4^{n-1}-1}\approx 4.
\]

\subsection{Interpretation of the domain-decomposition dynamics}

The two key points are
thus :
\begin{itemize}
    \item the domain-decomposition algorithm does not optimize over $\mathfrak{su}(2^n)$ in a single variational step. Instead, it performs a sequence of local optimizations, each of which is restricted to a subproblem of dimension
\[
4^{n-1}-1
\]
at the Lie-algebraic level. Thus, although each subdomain ansatz remains fully expressive on its own reduced register, the global procedure avoids exploring the full $n$-qubit unitary manifold at once.
\item we do not perform a full minimization of the subdomain cost functional, because the solution outside the subdomain currently being optimized is not yet converged. As a result, fully converging the local problem would be unnecessary at this stage; it is enough to obtain a sufficient improvement of the state on the active subdomain.
\end{itemize}

This observation provides a natural explanation for the improved trainability seen in the numerical experiments. The full-domain ansatz is maximally expressive, with
\[
\mathfrak g_n=\mathfrak{su}(2^n),
\]
while domain decomposition preserves local expressivity while reducing the effective Lie-algebraic complexity encountered at each optimization stage.

\end{document}